\newcommand{\bW}{\mathbf{W}}
\newcommand{\BR}{\text{BR}}
\newtheorem{property}{Property}%[section]
\newtheorem{theorem}{Theorem}%[section]
\theoremstyle{definition}
\theoremstyle{remark}
\title{Fictitious Play for Mean Field Games: \\Continuous Time Analysis and Applications}
\author{%
  Sarah Perrin\thanks{Equal contribution} $^{\; , 1}$, \; Julien Perolat\footnotemark[1] $^{\; , 2}$, \; Mathieu Lauri\`ere$^{3}$, \; Matthieu Geist$^{4}$,\AND
  Romuald Elie$^{2}$, \; Olivier Pietquin$^{4}$ \vspace{0.7em} \\
  Univ. Lille, CNRS, Inria, UMR 9189 CRIStAL$^{1}$\hspace{1cm} DeepMind Paris$^{2}$ \vspace{0.5em} \\
  Princeton University, ORFE$^{3}$\hspace{1cm} Google, Research Brain Team$^{4}$ \vspace{0.7em} \\
  \texttt{sarah.perrin@inria.fr}\hspace{1cm}\texttt{perolat@google.com} \vspace{0.5em} \\
  \texttt{lauriere@princeton.edu}\hspace{1cm}\texttt{[mfgeist, relie, pietquin]@google.com}
}
\begin{document}

\maketitle

\begin{abstract}
  In this paper, we deepen the analysis of continuous time Fictitious Play learning algorithm to the consideration of various finite state Mean Field Game settings (finite horizon, $\gamma$-discounted), allowing in particular for the introduction of an additional common noise. 
  We first present a theoretical convergence analysis of the continuous time Fictitious Play process and prove that the induced  exploitability decreases at a rate $O(\frac{1}{t})$. Such analysis emphasizes the use of exploitability as a relevant metric for evaluating the convergence towards a Nash equilibrium in the context of Mean Field Games. These theoretical contributions are supported by numerical experiments provided in either model-based or model-free settings. We provide hereby for the first time converging learning dynamics for Mean Field Games in the presence of common noise. 
\end{abstract}

\section{Introduction}
Learning in games has a long history~\cite{Shan50,samuel1959some} but learning in the midst of a large number of players still remains intractable. Even the most recent successes of machine learning, including Reinforcement Learning (RL)~\cite{Sutton2018}, remain limited to interactions with a handful of players (\textit{e.g.} Go~\cite{silver2016mastering,silver2017mastering, Silver18AlphaZero}, Chess~\cite{campbell2002deep}, Checkers~\cite{schaeffer2007checkers, samuel1959some}, Hex~\cite{Anthony17ExIT}, Starcraft II~\cite{vinyals2019grandmaster}, poker games~\cite{Brown17Libratus, Brown19Pluribus, moravvcik2017deepstack, bowling2015heads} or Stratego \cite{mcaleer2020pipeline}). Whilst the general multi-agent learning case might seem out of reach, considering interactions within a very large population of players may lead to tractable models. Inspired by the large economic literature on games with a continuum of players~\cite{aumann1964markets}, the notion of Mean Field Games (MFGs) has been introduced in \cite{MR2295621, MR2346927} to model strategic interactions through the distribution of players' states. In such framework, all players are identical, anonymous (\textit{i.e.}, they are not identifiable) and have symmetric interests. In this asymptotic formulation, the learning problem can be reduced to characterizing the optimal interactions between one representative player and the full population.

Most of the MFG literature assumes the representative player to be fully informed about the game dynamics and the associated reward mechanisms. In such context, the Nash equilibrium for an MFG is usually computed via the solution of a coupled system of dynamical equations. The first equation models the forward dynamics of the population distribution, while the second is the dynamic programming equation of the representative player. Such approaches typically rely on partial differential equations and require deterministic numerical approximations~\cite{achdoulauriere2020mfgnumerical} (\textit{e.g.}, finite differences methods~\cite{MR2679575,achdou2012mean}, semi-Lagrangian schemes~\cite{MR3148086,MR3392626}, or primal-dual methods~\cite{MR3772008,BricenoAriasetalCEMRACS2017}). Despite the success of these schemes, an important pitfall for applications is their lack of scalability. In order to tackle this limitation, stochastic methods based on approximations by neural network have recently been introduced in~\cite{CarmonaLauriere_DL_periodic,CarmonaLauriere_DL,fouque2019deep} using optimality conditions for general mean field games, in~\cite{ruthotto2020machine} for MFGs which can be written as a control problem, and in~\cite{cao2020connecting,lin2020apac} for variational MFGs in connection with generative adversarial networks. We now contribute and take a new step forward in this direction.

We investigate a generic and scalable simulation-based learning algorithm for the computation of approximate Nash equilibria, building upon the Fictitious Play scheme~\cite{robinson1951iterative, fudenberg1998theory, shapiro1958}.
We study the convergence of Fictitious Play for MFGs, using tools from the continuous learning time analysis~\cite{harris1998rate, ostrovski2013payoff, hofbauer2002global}. We derive a convergence of the Fictitious Play process at a rate $O(\frac{1}{t})$ in finite horizon or over $\gamma$-discounted monotone MFGs (see Appx. \ref{Sec_LQ_Appx}), thus extending previous convergence results restricted to simpler games~\cite{harris1998rate}. Besides, our approach covers games where the players share a common source of risk, which are widely studied in the MFG literature and crucial for applications.
To the best of our knowledge, we derive for the first time convergence properties of a learning algorithm for these so-called MFGs with common noise (where a common source of randomness affects all players~\cite{carmona2018probabilisticI-II}).  Furthermore, our analysis emphasizes the role of  \textit{exploitability} as a relevant metric for characterizing the convergence towards a Nash equilibrium, whereas most approximation schemes in the MFG literature quantify the rate of convergence of the population empirical distribution.
% Main contributions
The contribution of this paper is thus threefold: (1)  we provide several theoretical results concerning the convergence of \emph{continuous time} Fictitious Play in MFGs matching the $O(\frac{1}{t})$ rate existing in zero-sum two-player normal form game, (2) we generalize the notion of \emph{exploitability} to MFGs and we show that it is a meaningful metric to evaluate the quality of a learned control in MFGs, and (3) we empirically illustrate the performance of the resulting algorithm on several MFG settings, including examples with \emph{common noise}.
\section{Background on Finite Horizon Mean Field Games}
A Mean Field Game (MFG) is a temporally extended decision making problem involving an infinite number of identical and anonymous players. It can be solved by focusing on the optimal policy of a representative player in response to the behavior of the entire population. Let $\mathcal{X}$ and $\mathcal{A}$ be finite sets representing respectively the state and action spaces. The representative player starts the game in state $x \in \mathcal{X}$ according to an initial distribution $\mu_0$ over $\mathcal{X}$. At each time step $n \in [0,\dots, N]$, the representative player being in state $x_n$ takes an action $a_n$ according to a policy $\pi_n(a_n|x_n)$. As a result, the player moves to state $x_{n+1}$ according to the transition probability $p(.|x_n, a_n)$ and receives a reward $r(x_n, a_n, \mu_n)$, where $\mu_n$ represents the distribution over states of the entire population at time $n$. 
For a given sequence of policies $\pi = (\pi_n)_n$ and a given sequence of distributions $\mu=(\mu_n)_n$, the representative player will receive the cumulative sum of rewards defined as\footnote{All the theory can be easily extended in the case where the reward is also time dependent.}:
\begin{align}
    J(\mu_0, \pi, \mu) = \mathbb{E}\left[\sum_{n=0}^N r(x_n, a_n, \mu_n) \; | \; x_0 \sim \mu_0, \; x_{n+1} = p(.|x_n, a_n), \; a_n \sim \pi_n(.|x_n)\right].
\end{align}
\textbf{$Q$-functions and value functions:}
The $Q$-function is defined as the expected sum of rewards starting from state $x$ and doing action $a$ at time $n$:
\begin{align}
    & Q^{\pi, \mu}_n(x, a) 
    = \mathbb{E}\left[\sum_{k=n}^N r(x_k, a_k, \mu_k) \; | \; x_n = x,\; a_n=a, \; x_{k+1} = p(.|x_k, a_k), \; a_k \sim \pi_k(.|x_k)\right].
\end{align}
By construction, it satisfies the recursive equation:
$$
    Q^{\pi, \mu}_N(x,a) = r(x, a, \mu_N), 
    \quad Q^{\pi, \mu}_{n-1}(x,a) = r(x, a, \mu_{n-1}) + \sum \limits_{x' \in \mathcal{X}}p(x'|x,a)\mathbb{E}_{b \sim \pi_{n}(.|x')}\left[Q^{\pi, \mu}_{n}(x', b)\right].
$$
The value function is the expected sum of rewards for the player that starts from state $x$ and can thus be defined as: $V_n^{\pi,\mu}(x)=\mathbb{E}_{a \sim \pi(.|x)}\left[Q^{\pi, \mu}_n(x, a)\right]$.
Note that the objective function $J$ of a representative player rewrites in particular as an average at time $0$ of the value function $V$ under the initial distribution $\mu_0$: $J(\mu_0, \pi, \mu) = \mathbb{E}_{x \sim \mu_0(.)} \left[V^{\pi, \mu}_0(x)\right].$

\textbf{Distribution induced by a policy:}
The state distribution induced by $\pi = \{\pi_n\}_n$ is defined recursively by the forward equation starting from $\mu^\pi_0(x) = \mu_0(x)$ and $\mu^\pi_{n+1}(x') = \sum \limits_{x, a \in \mathcal{X}\times \mathcal{A}} \pi_n(a|x) p(x'|x,a)\mu^\pi_{n}(x)$.

\textbf{Best Response:} A best response policy $\pi^{BR}$ is a policy that satisfies $ J(\mu_0, \pi^{BR}, \mu^{\pi}) = \max \limits_{\pi'} J(\mu_0, \pi', \mu^{\pi})$. Intuitively, it is the optimal policy an agent could take if it was to deviate from the crowd's policy.

\textbf{Exploitability:} The exploitability $\phi(\pi)$ of policy $\pi$ quantifies the average gain for a representative player to replace its policy by a best response, while the entire population plays with policy $\pi$:
$\phi(\pi) := \max \limits_{\pi'} J(\mu_0, \pi', \mu^{\pi}) - J(\mu_0, \pi, \mu^{\pi})$. Note that, as it scales with rewards, the absolute value of the exploitability is not meaningful. What matters is its relative value compared with a reference point, such as the exploitability of the policy at initialization of the algorithm. In fact, the exploitability is game dependent and hard to re-scale without introducing other issues (dependence on the initial policy if we re-normalize with the initial exploitability for example). 

\textbf{Nash equilibrium:} A Nash equilibrium is a policy satisfying $\phi(\pi)=0$ while an approximate Nash equilibrium has a small level of exploitability. 

The exploitability is an already well known metrics within the computational game theory literature~\cite{zinkevich2008regret, bowling2015heads, lanctot2009monte, burch2014solving}, and one of the objectives of this paper is to  emphasize its important role in the context of MFGs. Classical ways of evaluating the performance of numerical methods in the MFG literature typically relate to distances between distribution $\mu$ or value function $V$, as for example in~\cite{achdoulauriere2020mfgnumerical}. A close version of the exploitability has been used in this context (\textit{e.g.},~\cite{guo2019learning}), but being  computed over all possible starting states at any time. Such formulation gives too much importance to each state, in particular those having a (possibly very) small probability of appearance. In comparison, the exploitability provides a  well balanced average metrics over the trajectories of the state process.

\textbf{Monotone games:}
A game is said monotone if the reward has the following structure:
$r(x,a,\mu) = \tilde r(x,a) + \bar r(x,\mu)$ and $ \forall \mu, \mu', \; \sum_{x \in \mathcal{X}} (\mu(x) - \mu'(x))(\bar r(x,\mu) - \bar r(x,\mu'))\leq 0.$
 This so-called Lasry-Lions monotonicity condition is classical to ensure the uniqueness of the Nash equilibrium~\cite{MR2295621}.

\textbf{Learning in finite horizon problems:} When the distribution $\mu$ of the population is given, the representative player faces a classical finite horizon Markov Decision problem. Several approaches can be used to solve this control problem such as model-based algorithms (\textit{e.g.} backward induction: Algorithm~\ref{algBI} in Appx.~\ref{Sec_Algorithm}, with update rule $\forall a, x \in \mathcal{A} \times \mathcal{X} \; Q^{\mu}_{n-1}(x,a) = r(x, a, \mu_{n-1}) + \sum_{x' \in \mathcal{X}}p(x'|x,a)\max \limits_b Q^{\mu}_{n}(x', b)$) or model-free algorithms (\textit{e.g.} $Q$-learning: Algorithm~\ref{algQ_learning} in Appx.~\ref{Sec_Algorithm} with update rule $Q^{k+1}_n(x^k_n, a_n^k) = (1-\alpha)Q^{k+1}_n(x^k_n, a_n^k) + \alpha[r(x^k_n, a_n^k, \mu_{k-1}) + \max_b Q^{k}_{n+1}(x^k_{n+1}, b)]$). 

\textbf{Computing the population distribution:}
Once a candidate policy is identified, one needs to be able to compute (or estimate) the induced distribution of the population at each time step. It can either  be computed exactly using a model-based method such as Algorithm~\ref{algDE} in Appx. \ref{Sec_Algorithm},  or alternatively be estimated with a model-free method like Algorithm~\ref{algEDE} in Appx. \ref{Sec_Algorithm}.

\textbf{Fictitious Play for MFGs:}
 Consider available (1) a computation scheme for the  population distribution given a policy, and (2) an approximation algorithm for an optimal policy of the representative player in response to a population distribution. Then, discrete time Fictitious Play presented in Algorithm~\ref{algFP} provides a robust  approximation scheme for Nash equilibrium by computing iteratively the best response against the  distribution induced by the average of the past best responses. We will analyse this discrete time process in continuous time in section~\ref{CTFP_in_MFG}. To differentiate the discrete time from the continuous time, we denote the discrete time with $j$ and the continuous time with $t$. At a given step $j$ of Fictitious Play, we have that:
\begin{equation}
\label{eq:discrete-bar-mu}
\forall n,\; \bar \mu^j_n= \frac{j-1}{j}\bar \mu^{j-1}_n + \frac{1}{j}\mu^{\pi^j}_n
\end{equation}
The policy generating this average distribution is:
\begin{equation}
\label{eq:discrete-bar-pi-nj}
    \forall n,\; \bar \pi^j_n(a|x) = \frac{\sum _{i=0}^j \mu^{\pi^i}_n(x) \pi^i_n(a|x)}{\sum_{i=0}^j \mu^{\pi^i}_n(x)}.
\end{equation}
\begin{algorithm2e}[ht!]
\SetKwInOut{Input}{input}\SetKwInOut{Output}{output}
\Input{Start with an initial policy $\pi_0$, an initial distribution $\mu_0$ and define $\bar{\pi}_0 = \pi_0$}
\caption{Fictitious Play in Mean Field Games \label{algFP}}
\For{$j=1,\dots,J$:}{
      find $\pi^j$ a best response against $\bar \mu^j$ (either with $Q$-learning or with backward induction)\;
      compute $\bar\pi^{j}$ the average of $(\pi^0, \dots, \pi^{j})$\;
      compute $\mu^{\pi^{j}}$ (either with a  model-free or model-based method)\;
      compute $\bar \mu^j$ the average of $(\mu^0, \dots, \mu^{\pi^{j}})$  
}
\Return{$\bar \pi^J$, $\bar \mu^J$}
\end{algorithm2e}

\section{Continuous Time Fictitious Play in Mean Field Games}
\label{CTFP_in_MFG}

In this section, we study a continuous time version of Algorithm~\ref{algFP}. The continuous time Fictitious Play process is defined  following the lines of \cite{harris1998rate, ostrovski2013payoff}.
First, we start for $t<1$ with a fixed policy $\bar \pi ^{t<1} = \{\bar\pi_{n}^{t<1}\}_{n} = \{\pi_{n}^{t<1}\}_{n}$ with induced distribution  $\bar \mu^{t<1} = \mu^{t<1}=\mu^{\pi^{t<1}} = \{\mu^{\pi^{t<1}}_n \}_n$ (this arbitrary policy for $t\in[0,1]$ is necessary for the process to be defined at the starting point). Then, the Fictitious Play process is defined for all $t\geq 1$ and $n \in [1,\dots ,N]$ as:
\begin{align}
    \frac{d}{dt} \bar \mu_n^t (x) = \frac{1}{t}\left(\mu_n^{\BR, t} (x)-\bar \mu_n^t (x)\right) \quad
    \textrm{ or in integral form: }
    \quad \bar \mu_n^t (x) = \frac{1}{t} \int \limits_{s=0}^t \mu_n^{\BR, s} (x) ds\,,
\end{align}
where $\mu_n^{\BR, t}$ denotes the distribution induced by a best response policy $\{\pi^{\BR,t}_n\}_n$ against $\bar\mu_n^t (x)$. Hence, the distribution $\mu_n^t (x)$ identifies to the population distribution induced by the averaged policy $\{\pi_{n}^{t}\}_{n}$ defined as follows (proof in~\ref{app:CFP_FH}):
\begin{align}
    &\forall n, \;\bar\mu_n^t(x) \frac{d}{dt} \bar\pi_n^t(a|x)  = \frac{1}{t} \mu^{\BR,t}_n(x) [\pi^{\BR,t}_n(a|x) - \bar\pi_n^t(a|x)]\\
    &\textrm{ or in integral form: } \forall n, \;\bar\pi_n^t(a|x) \int \limits_{s=0}^t \mu^{\BR,s}_n(x) ds = \int \limits_{s=0}^t \mu^{\BR,s}_n(x) \pi^{\BR,s}_n(a|x) ds,
\end{align}
with $\pi^{\BR,s}_n$ being chosen arbitrarily for $t \leq 1$. We are now in position to provide the main result of the paper quantifying  the convergence rate of the continuous Fictitious Play process. 
\begin{theorem}
\label{thm:fp_FH}
If the MFG satisfies the  monotony assumption, we can show that the exploitability is a strong Lyapunov function of the system, $\forall t \geq 1$:
$\frac{d}{dt} \phi(\bar\pi^t) \leq - \frac{1}{t} \phi(\bar\pi^t).$
Hence $\phi(\bar\pi^t) = O(\frac{1}{t})$.
\end{theorem}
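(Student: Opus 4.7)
The target is the differential inequality $\frac{d}{dt}\phi(\bar\pi^t)\le -\tfrac{1}{t}\phi(\bar\pi^t)$; Gr\"onwall integration on $[1,t]$ then gives $\phi(\bar\pi^t)\le \phi(\bar\pi^1)/t$. My plan is to obtain an \emph{exact} decomposition $\frac{d}{dt}\phi(\bar\pi^t)= -\tfrac{1}{t}\phi(\bar\pi^t) + \tfrac{1}{t}Q(t)$, in which $Q(t)$ is a quadratic form in $\mu^{\BR,t}-\bar\mu^t$ that the Lasry-Lions monotonicity forces to be non-positive. Writing $\phi(\bar\pi^t)=V^*(t)-V(t)$ with $V^*(t):=J(\mu_0,\pi^{\BR,t},\bar\mu^t)$ and $V(t):=J(\mu_0,\bar\pi^t,\bar\mu^t)$, and using the monotone split $J(\mu_0,\pi',\mu)=\tilde J(\pi')+\sum_{n,x}\mu^{\pi'}_n(x)\bar r(x,\mu_n)$, isolates the $\mu$-dependence of $J$ entirely in the $\bar r$ coupling.

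The key linearity-in-time observation is that the integral definitions of $\bar\mu^t$ and $\bar\pi^t$ give $\tilde J(\bar\pi^t)=\tfrac{1}{t}\int_0^t\tilde J(\pi^{\BR,s})\,ds$, and hence $\frac{d}{dt}\tilde J(\bar\pi^t)=\tfrac{1}{t}(\tilde J(\pi^{\BR,t})-\tilde J(\bar\pi^t))$. Differentiating $V^*(t)$ with the envelope theorem (only the $\bar\mu^t$-slot contributes since $\pi^{\BR,t}$ is a maximizer), and using $\frac{d\bar\mu^t}{dt}=\tfrac{1}{t}(\mu^{\BR,t}-\bar\mu^t)$, I get a term proportional to $\sum_{n,x,y}\mu^{\BR,t}_n(x)\,\partial_{\mu_n(y)}\bar r(x,\bar\mu^t_n)(\mu^{\BR,t}_n(y)-\bar\mu^t_n(y))$. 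Differentiating $V(t)$ via the product rule on $\sum_{n,x}\bar\mu^t_n(x)\bar r(x,\bar\mu^t_n)$ combined with the linearity identity produces three pieces: a $\tilde J$-piece, a piece $\tfrac{1}{t}\sum_{n,x}(\mu^{\BR,t}_n(x)-\bar\mu^t_n(x))\bar r(x,\bar\mu^t_n)$, and an analogous $\partial_\mu\bar r$-term weighted by $\bar\mu^t_n$ in place of $\mu^{\BR,t}_n$. Subtracting $\frac{d}{dt}V$ from $\frac{d}{dt}V^*$, the first two pieces reassemble \emph{exactly} to $-\tfrac{1}{t}\phi(\bar\pi^t)$ (they sum to $-\tfrac{1}{t}(V^*-V)$), and the two $\partial_\mu\bar r$-pieces combine into
\[
Q(t)=\sum_n\sum_{x,y}(\mu^{\BR,t}_n(x)-\bar\mu^t_n(x))\,\partial_{\mu_n(y)}\bar r(x,\bar\mu^t_n)\,(\mu^{\BR,t}_n(y)-\bar\mu^t_n(y)),
\]
which is the differential form of Lasry-Lions monotonicity at $\bar\mu^t$ along the direction $\mu^{\BR,t}-\bar\mu^t$, hence $Q(t)\le 0$.

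The main obstacle is justifying the passage from the \emph{discrete} Lasry-Lions inequality stated in the excerpt to the \emph{infinitesimal} negative-semi-definiteness $Q(t)\le 0$: under smoothness of $\bar r$ in $\mu$ this is immediate by linearization of monotonicity, but in general the cleanest fix is to avoid differentiating $\bar r$ altogether and instead show that $t\mapsto t\phi(\bar\pi^t)$ is non-increasing by a finite-difference argument applying the discrete inequality directly to $\bar\mu^t$ against $\mu^{\BR,s}$ for $s<t$. A minor technical point is choosing a measurable $\pi^{\BR,t}$ to validate the envelope step when best responses are not unique; because the non-envelope cross terms cancel identically before differentiation, any measurable selection will do. Once $\frac{d}{dt}\phi(\bar\pi^t)\le -\tfrac{1}{t}\phi(\bar\pi^t)$ is in hand, Gr\"onwall on $[1,t]$ yields the claimed $O(1/t)$ rate.
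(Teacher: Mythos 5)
Your proposal is correct and follows essentially the same route as the paper's proof in Appendix~\ref{app:CFP_FH}: the same envelope step for the best-response term, the same use of the fact that the averaged policy generates the averaged distribution (Property~\ref{prop:avg-policy}, which underlies your linearity identity for $\tilde J$), and the same exact decomposition $\frac{d}{dt}\phi(\bar\pi^t) = -\frac{1}{t}\phi(\bar\pi^t) + \frac{1}{t}Q(t)$ with $Q(t)\le 0$ by the infinitesimal form of Lasry--Lions monotonicity. The technical point you flag about passing from the discrete monotonicity inequality to $Q(t)\le 0$ is handled in the paper exactly as you suggest, by linearizing under smoothness (Property~\ref{prop:mono-r-dmu}, obtained via the $\tau \to 0$ limit).
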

The proof of the theorem is postponed to Appendix~\ref{app:CFP_FH}. Furthermore, a similar property for $\gamma$ discounted MFGs is provided in Appendix \ref{Section_Gamma}. We chose to present an analysis in continuous time because it provides convenient mathematical tools allowing to exhibit  state  of  the  art  convergence  rate. In discrete time, similarly to normal form games \cite{karlin1959mathematical, daskalakis2014counter}, we conjecture that the convergence rate for monotone MFGs is $O(t^{-\frac{1}{2}})$.

\section{Experiments on Fictitious Play in the Finite Horizon Case}
\label{sec:expe-FP-FH}
In this section, we illustrate the theoretical convergence of continuous time Fictitious Play by looking at the discrete time implementation of the process. We focus on classical linear quadratic games which have been extensively studied~\cite{bensoussan2016linear,graber2016linear,duncan2018linear} and for which a closed form solution is available. We then turn to a more difficult numerical setting for experiments\footnote{In all experiments, we represent $\bar \mu$, but applying $\bar \pi$ to $\mu_0$ would give the same result as $\bar \mu = \mu^{\bar\pi}$.}. We chose either a full model-based implementation or a full model-free approach of Alg. \ref{algFP}. The model-based uses Backward Induction (Alg. \ref{algBI}) and an exact calculation of the population distribution (Alg. \ref{algEDE}). The model-free approach uses $Q$-learning (Alg. \ref{algQ_learning}) and a sampling-based estimate of the distribution (Alg.~\ref{algDE}). 

\subsection{Linear Quadratic Mean Field Game}
\textbf{Environment:} We consider a Markov Decision Process a finite action space $\mathcal{A} = \{-M, \dots, M\}$ together with a one dimensional finite state space domain $\mathcal{X} = \{-L, \dots, L\}$, which can be viewed as  a truncated and discretized version of $\mathbb{R}$. 
The dynamics of a typical player picking action $a_n$ at time $n$ are governed by the following equation:
\begin{align}
    x_{n+1} = x_{n} + (K(m_n - x_n) + a_n)\Delta_n + \sigma\epsilon_{n}\sqrt{\Delta_n}\;,
\end{align}
allowing the representative player to either stay still or move to the left or to the right. In order to make the model more complex, an additional discrete noise $\epsilon_n$ can also push the player to the left or to the right with a small probability: $\epsilon_{n} \sim \mathcal{N}(0,1)$, which is in practice discretized over $\{-3\sigma,\dots,3\sigma\}$. The resulting state $x_{n+1}$ is rounded to the closest discrete state.

At each time step, the player can move up to $M$ nodes and it receives the reward:
$$
    r(x_n,a_n,\mu_n) =  [ -\frac{1}{2}|a_n|^2 + q a_n (m_n - x_n) - \frac{\kappa}{2}(m_n - x_n)^2 ]\Delta_n
$$
where $m_n = \sum_{x \in \mathcal{X}} x\mu_n(x)$ is the first moment of the state distribution $\mu_n$. $\Delta_n$ is the time lapse between two successive steps, while $q$ and $\kappa$ are given non-negative constants. The first term quantifies the action cost,  while the two last ones encourage the player to remain close to the  average state of the population at any time.
Hereby, the optimal policy pushes each player in the direction of the population average state. We set the terminal reward to $r(x_N,a_N,\mu_N) =  -\frac{c_{\rm term}}{2}(m_N - x_N)^2$.

\begin{figure}[htbp]
    \centering
    \begin{subfigure}{0.25\textwidth}
      \centering
      \includegraphics[width=1.0\linewidth]{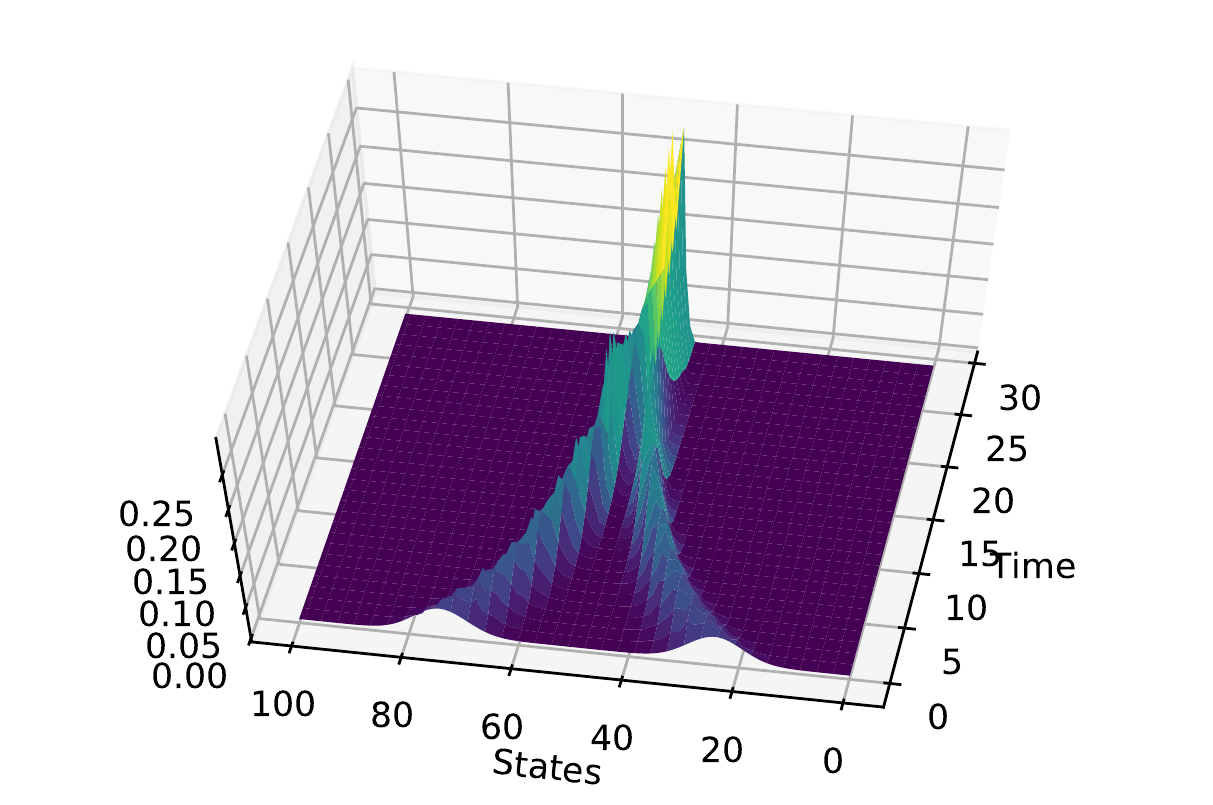}
      \caption{Exact Solution}
    \end{subfigure}%
    \begin{subfigure}{0.25\textwidth}
      \centering
      \includegraphics[width=1.0\linewidth]{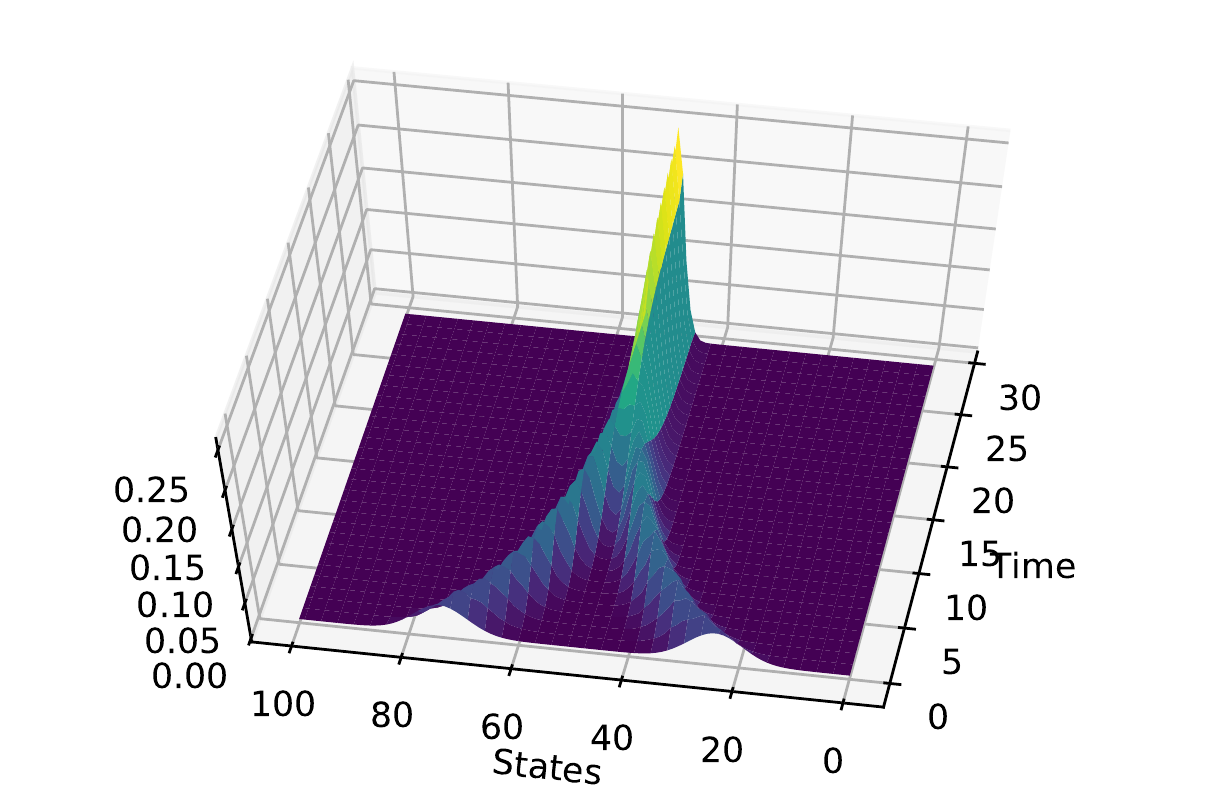}
      \caption{Model-based}
    \end{subfigure}%
    \begin{subfigure}{0.25\textwidth}
      \centering
      \includegraphics[width=1.0\linewidth]{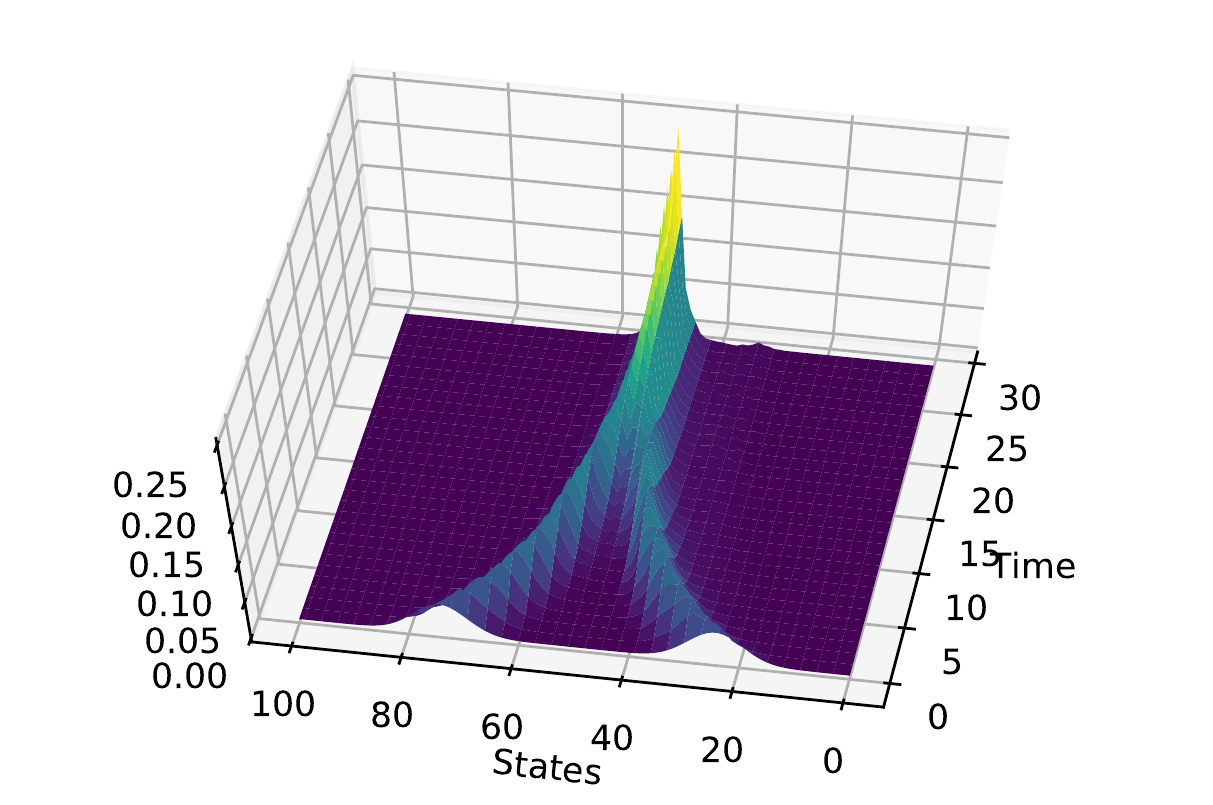}
      \caption{Model-free}
    \end{subfigure}%
    \begin{subfigure}{0.25\textwidth}
      \centering
      \includegraphics[width=1.0\linewidth]{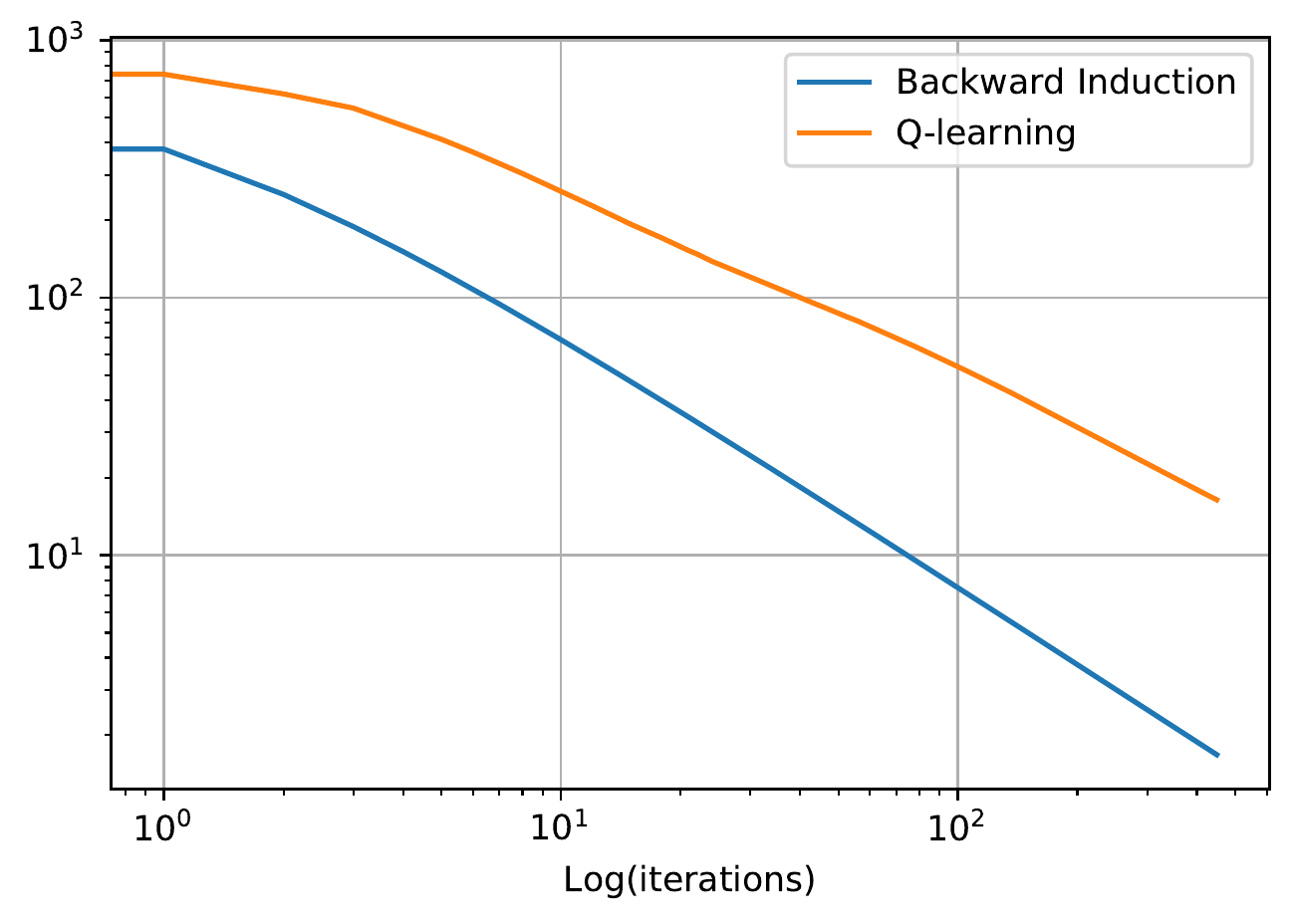}
      \caption{Exploitability}
    \end{subfigure}

    \caption{Evolution of the distribution in the linear quadratic MFG with finite horizon.}
    
    \label{fig:LQ}
    \end{figure}
    
\textbf{Experimental setup:} We consider a Linear Quadratic MFG with $100$ states and an horizon $N=30$, which provides a closed-form solution for the continuous state and action version of the game (see Appx. \ref{Section_Gamma}) and bounds the number of actions $M=37$ required in the implementation. 
In practice, the variance $\sigma$ of the idiosyncratic noise $\epsilon_n$ is adapted to the number of states. Here, we set $\sigma = 3$, $\Delta_n=0.1$, $K = 1$, $q = 0.01$, $\kappa = 0.5$ and $c_{\rm term}=1$. In all the experiments, we set the learning rate $\alpha$ of $Q$-learning to 0.1 and the $\varepsilon$-greedy exploration parameter to $0.2$.

\textbf{Numerical results:} Figure \ref{fig:LQ} illustrates the convergence of Fictitious Play model-based and model-free algorithm in such context. The initial distribution, which is set to two separated bell-shaped distributions, are both driven towards $m$ and converge to a unique bell-shaped distribution as expected. The parameter $\sigma$ of the idiosyncratic noise influences the variance of the final normal distribution. We can observe that both Backward Induction and $Q$-learning provide policies that approximate this behaviour, and that the exploitability decreases with a rate close to $O(1/t)$ in the case of the model-based approach, while the model-free decreases more slowly.

\subsection{The Beach Bar Process}

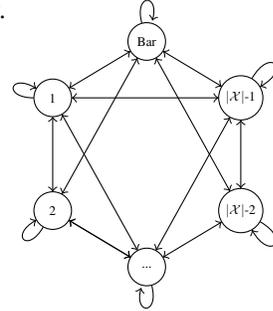
\begin{wrapfigure}{r}{0.35\textwidth}
    %\begin{figure}
    \vspace{-60pt}
    \centering
    \begin{tikzpicture}
        [shorten >=0pt, auto,
        node distance=2.5cm, scale=0.5, 
        transform shape, align=center, 
        state/.style={circle, draw, minimum size=1cm}]
        \node[state] (A) at (0,0) {2};
        \node[state] (B) at (0,3) {1};
        \node[state] (C) at (2.5,4.5) {Bar};
        \node[state] (D) at (5,0) {$|\mathcal{X}|$-2};
        \node[state] (E) at (2.5,-1.5) {...};
        \node[state] (F) at (5,3) {$|\mathcal{X}|$-1} ;
    
        \path [<->] (A) edge node[bend left] {} (B);
        \path [<->](B) edge node[left] {} (C);
        \path [<->](A) edge node[right] {} (E);
        \path [<->](C) edge node[right] {} (F);
        \path [<->](D) edge node[right] {} (F); 
        \path [<->](E) edge node[right] {} (D); 
        
        \path [<->] (A) edge node[left] {} (C);
        \path [<->](B) edge node[left] {} (E);
        \path [<->](A) edge node[right] {} (E);
        \path [<->](C) edge node[right] {} (D);
        \path [<->](B) edge node[right] {} (F); 
        \path [<->](E) edge node[right] {} (F); 
        
        \path [->](A) edge [out=240,in=210,looseness=8] node {} (A);
        \path [->](B) edge [out=180,in=150,looseness=8] node {} (B);
        \path [->](C) edge [loop above] node {} (C);
        \path [->](D) edge [out=330,in=300,looseness=8] node {} (D);
        \path [->](E) edge [loop below] node {} (E);
        \path [->](F) edge [out=60,in=30,looseness=8] node {} (F);
    \end{tikzpicture}
    \vspace{-2pt}
    \caption{The beach bar process.}
    \label{fig:beachbar}
    \vspace{-5pt}
    %\end{figure}
\end{wrapfigure}

As a second illustration, we now consider the beach bar process, a more involved monotone second order MFG with discrete state and action spaces, that does not offer a closed-form solution but can be analyzed intuitively. This example is a simplified version of the well known Santa Fe bar problem, which has received a strong interest in the MARL community, see e.g.~\cite{arthur1994inductive,farago2002fair}.

\textbf{Environment:} The beach bar process (Figure~\ref{fig:beachbar}) is a Markov Decision Process with $|\mathcal{X}|$ states disposed on a one dimensional torus ($\mathcal{X} = \{0,\dots,|\mathcal{X}|-1\}$), which represents a beach. A bar is located in one of the states. As the weather is very hot, players want to be as close as possible to the bar, while keeping away from too crowded areas. Their dynamics is governed by the following equation: $$x_{n+1} = x_{n} + b(x_n, a_n) + \epsilon_{n}$$
where $b$ is the drift, allowing the representative player to either stay still or move one node to the left or to the right. The additional noise $\epsilon_n$ can push the player one node away to the left or to the right with a small probability:
\begin{align}
    b(x_n, a_n) = \left\{
    \begin{array}{ll}
        1 & \mbox{ if }  a_n = \mbox{right} \\
        0 & \mbox{ if }  a_n = \mbox{still} \\
        -1 & \mbox{ if } a_n = \mbox{left}
    \end{array}
\right. 
&&
\epsilon_{n} = \left\{
    \begin{array}{ll}
        1 & \mbox{ with probability }  \frac{1-p}{2} \\
        0 & \mbox{ with probability }  p \\
        -1 & \mbox{ with probability }  \frac{1-p}{2}
    \end{array}
\right.
\end{align}

Therefore, the player can go up to two nodes right or left and it receives, at each time step, the reward:
$$r(x_n,a_n,\mu_n) = \tilde{r}(x_n) - \frac{|a_n|}{\mathcal{|X|}} - \log(\mu_n(x_n))\;,$$
where $\tilde{r}(x_n)$ denotes the distance to the bar, whereas the last term represents the aversion of the player for crowded areas in the spirit of~\cite{MR3698446}.

\textbf{Numerical results:} We conduct an experiment with 100 states and an horizon $N=15$. Starting from a uniform distribution, we can observe in Figure~\ref{fig:FH} that both backward induction and $Q$-learning algorithms converge quickly to a peaky distribution where the representative player intends to be as close as possible to the bar while moving away if the bar is already too crowded. The exploitability offers a nice way to measure how close we are from the Nash equilibrium and shows as expected that the model-based algorithm (backward induction) converges at a rate $O(1/t)$ and faster than the model-free algorithm ($Q$-learning).

\begin{figure}[htbp]
    \centering
    \begin{subfigure}{0.33\textwidth}
      \centering
      \includegraphics[width=1.0\linewidth]{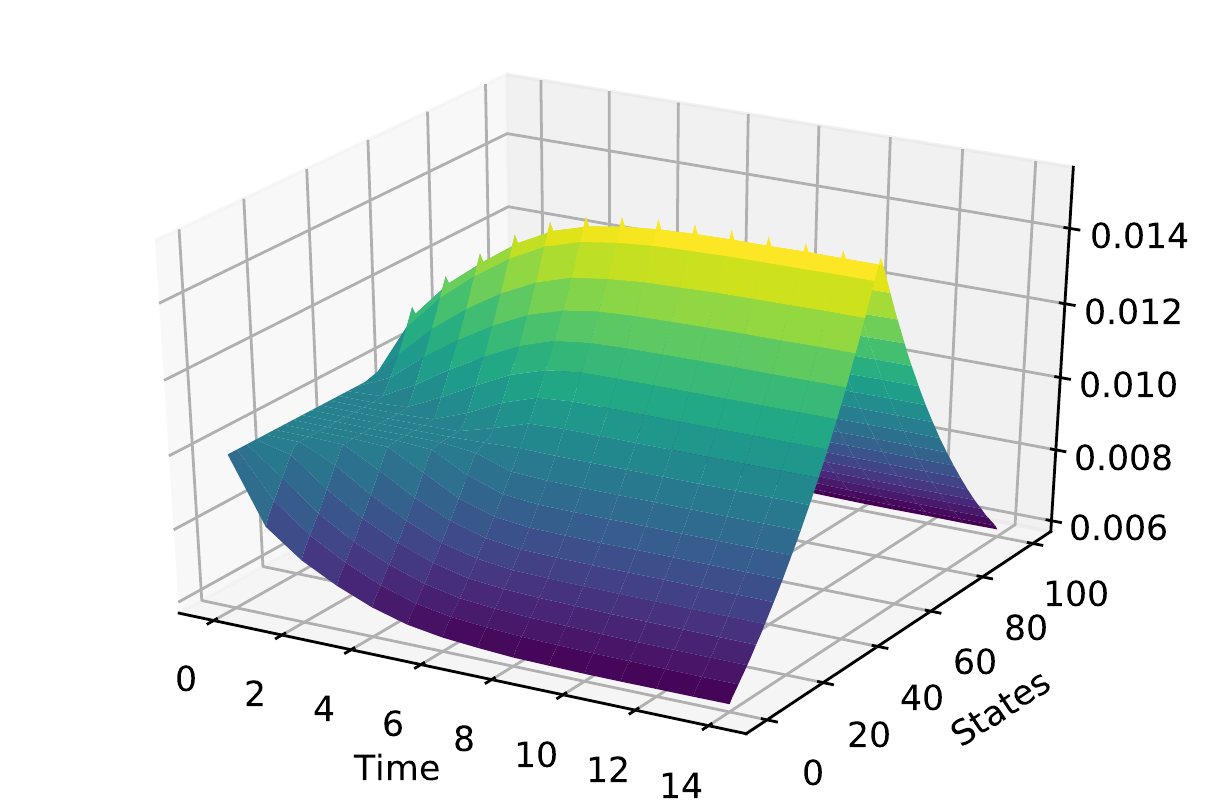}
      \caption{Model-based}
      \label{subfig:FH-BI}
    \end{subfigure}%
    \begin{subfigure}{0.33\textwidth}
      \centering
      \includegraphics[width=1.0\linewidth]{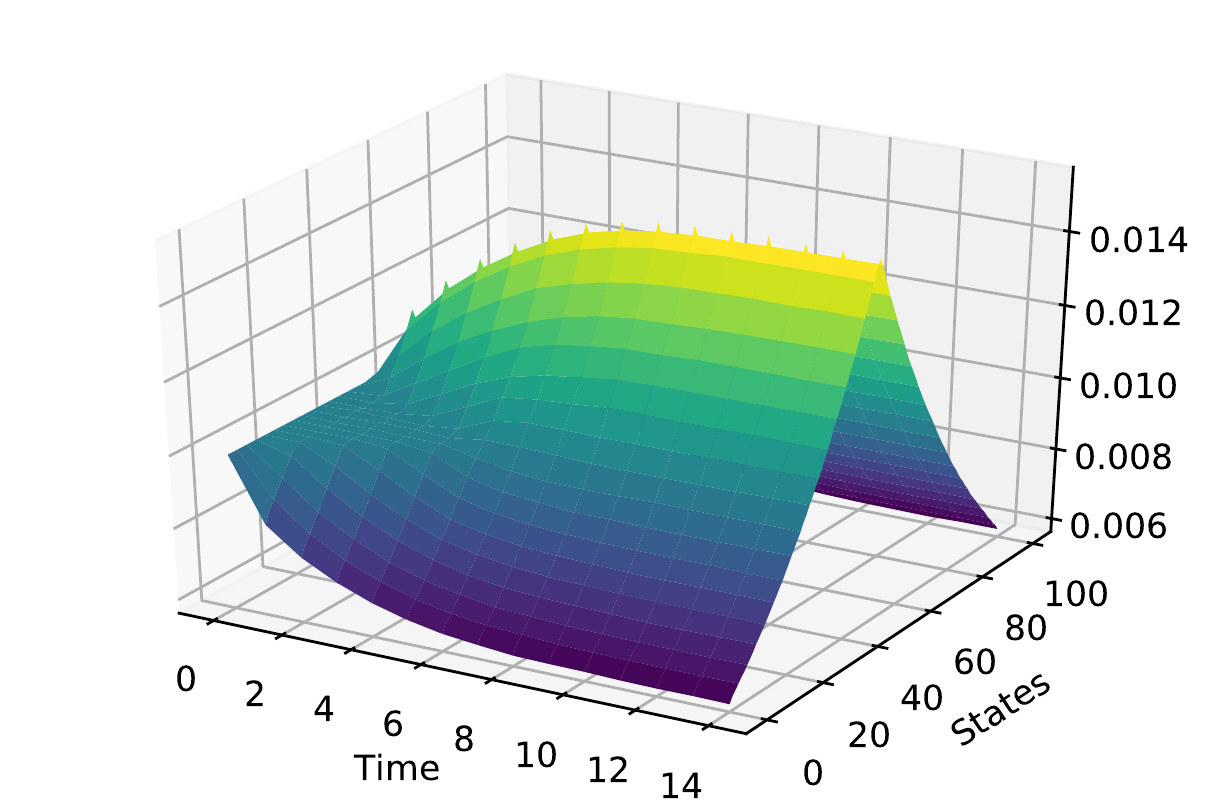}
      \caption{Model-free}
      \label{subfig:FH-Q}
    \end{subfigure}%
    \begin{subfigure}{0.33\textwidth}
      \centering
      \includegraphics[width=1.0\linewidth]{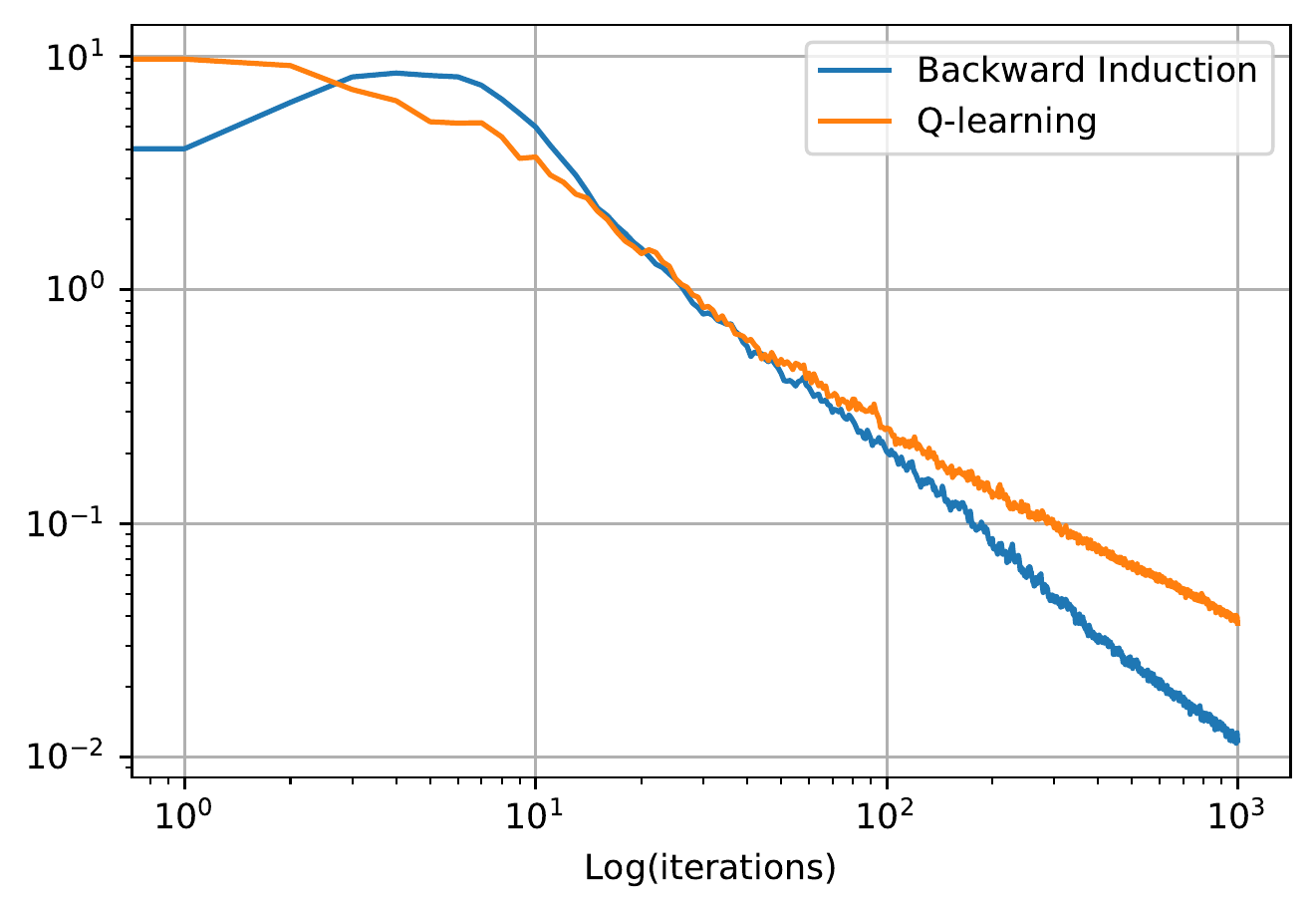}
      \caption{Exploitability}
      \label{subfig:FH-exp}
    \end{subfigure}

    \caption{Beach bar process in finite horizon: (a, b) evolution of the distribution, (c)  exploitability.
    }
    \label{fig:FH}
    \end{figure}

\section{Finite Horizon Mean Field Games with Common Noise}
We now turn to the consideration of so-called MFG with common noise, that is including an additional discrete \emph{and common} source of randomness in the dynamics. Players still sequentially take actions ($a \in \mathcal{A}$) in a state space $\mathcal{X}$, but the dynamics and the reward are affected by a common noise sequence $\{\xi_n\}_{0\leq n\leq N}$. We denote $\Xi_n = \{\xi_k\}_{0\leq k < n} = \Xi_{n-1}.\xi_{n-1} $ where $|\Xi_n|$ represents the total length of the sequence. The extra common source of randomness $\xi$ affects both the reward $r(x, a, \mu, \xi)$ and the probability transition function  $p(x'|x, a, \xi)$. We consider policies $\pi_n(a|x,\Xi)$ and population distribution $\mu_{n}(x| \Xi)$ which are both noise-dependent, and will simply be denoted $\pi_{n, \Xi}(a|x)$ and $\mu_{n| \Xi}(x)$. The $Q$ function is defined as: 
\begin{align}
    &Q^{\pi, \mu}_N(x,a|\Xi_N) = r(x, a, \mu_{N| \Xi_{N}}, \xi_N),\quad Q^{\pi, \mu}_{n-1}(x,a|\Xi_{n-1}) =\sum \limits_{\xi} P(\xi_{n-1}=\xi|\Xi_{n-1}) \Big[\\
    &\qquad r(x, a, \mu_{n-1, \Xi_{n-1}}, \xi) + \sum \limits_{x' \in \mathcal{X}}p(x'|x,a, \xi)\mathbb{E}_{b \sim \pi_{n}(.|x', \Xi_{n-1}.\xi)}\left[Q^{\pi, \mu}_{n}(x', b|\Xi_{n-1}.\xi)\right]\Big],
\end{align}
while the value function is simply 
$V_n^{\pi,\mu}(x,\Xi_{n})=\mathbb{E}_{a \sim \pi_{n,\Xi_{n}}(.|x)}\left[Q^{\pi,\mu}_n(x,a |\Xi_{n})\right]$. Similarly,
 the distribution over states is conditioned on the sequence of noises and satisfies the balance equation: $\mu^\pi_0(x, \Xi_0) = \mu_0(x)$ (with $\Xi_{0}$ being the empty sequence $\{\}$) and $\mu^\pi_{n+1}(x'|\Xi.\xi) = \sum \limits_{x \in \mathcal{X}} p^{\pi_{n,\Xi.\xi}}(x'|x,\xi)\mu^\pi_{n}(x|\Xi)$. 
The expected return for a representative player starting at $\mu_0$ is:
\begin{align}
J(\mu_0, \pi, \mu) = \sum_{x \in \mathcal{X}} \mu_0(x) V^{\pi, \mu}_0(x, \Xi_{0}) = \sum_{n=0}^N \,\, \sum_{\Xi, \xi, |\Xi|=n} P(\Xi.\xi) \sum_{x \in \mathcal{X}} \left[ \mu_{n}(x, \Xi) r(x, a, \mu_{n, \Xi}, \xi) \right]
\end{align}
with $P(\Xi_0) = 1$ and $P(\Xi.\xi) = P(\xi|\Xi) P(\Xi)$.
Finally the {\bf exploitability} is again defined as:
\begin{equation}
    \phi(\pi) = \max_{\pi'}J(\mu_0, \pi', \mu^\pi)-J(\mu_0, \pi, \mu^\pi).    
\end{equation}

\textbf{Continuous time Fictitious Play for MFGs with common noise:}
The Fictitious play process on MFGs with common noise is as follows.
For $t<1$, we start with an arbitrary policy $\bar \pi^{t<1}$ (by convention we will take $\bar \pi^{t}=\pi^{\BR,t}$ for $t<1$) whose distribution is $\bar \mu^{t<1}=\mu^{\pi^{t<1}}$ (with the convention that $\bar \mu^{t}=\mu^{\BR,t}$). Then, for all $t$ and $\Xi$:
\begin{align}
    \bar \mu_n^t (x|\Xi) = \frac{1}{t} \int \limits_{s=0}^t \mu_n^{\BR, s} (x|\Xi) ds,
\end{align}
where $\mu^{\BR, t}$ is the distribution of a best response policy $\pi^{\BR,t}$ against $\bar \mu^t$ when $t\geq 1$. The distribution $\mu^t$ is the distribution of a policy $\bar \pi^{t}$, which is defined as follows for $t\geq 1$:
\begin{align}
    \forall n, \Xi,\qquad \;\bar \pi_n^t(a|x,\Xi) \int \limits_{s=0}^t \mu^{\BR,s}_n(x|\Xi) ds = \int \limits_{s=0}^t \mu^{\BR,s}_n(x|\Xi) \pi^{\BR,s}_n(a|x,\Xi) ds.
\end{align}
\begin{theorem}
\label{thm:fp_FHCN}
Under the monotony assumption,  the exploitability is a strong Lyapunov function of the system for $t\geq 1$:
$\frac{d}{dt} \phi(\bar \pi^t) \leq - \frac{1}{t} \phi(\bar \pi^t).$
Therefore, $\phi(\bar \pi^t) = O(\frac{1}{t})$.
\end{theorem}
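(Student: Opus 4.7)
The plan is to imitate the proof of Theorem \ref{thm:fp_FH} but to carry along the conditioning on the common noise trajectory $\Xi$ throughout; the additional structure of common noise only multiplies the bookkeeping, not the logic, because monotonicity is a per-realization condition on $(x,\mu)$.

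First, I would rewrite the exploitability as
\begin{equation}
\phi(\bar\pi^t) \;=\; J(\mu_0,\pi^{\BR,t},\mu^{\bar\pi^t}) \;-\; J(\mu_0,\bar\pi^t,\mu^{\bar\pi^t}),
\end{equation}
where $\pi^{\BR,t}$ is the best response against $\bar\mu^t=\mu^{\bar\pi^t}$. From the integral definition of $\bar\mu^t_n(x|\Xi)$ I would extract
\begin{equation}
\mu^{\BR,t}_n(x|\Xi) \;=\; \bar\mu^t_n(x|\Xi) + t\,\tfrac{d}{dt}\bar\mu^t_n(x|\Xi),
\end{equation}
and from the integral definition of $\bar\pi^t_n(\cdot|x,\Xi)$ the analogous identity for the averaged policy, namely
$\bar\mu^t_n(x|\Xi)\tfrac{d}{dt}\bar\pi^t_n(a|x,\Xi) = \tfrac{1}{t}\mu^{\BR,t}_n(x|\Xi)[\pi^{\BR,t}_n(a|x,\Xi)-\bar\pi^t_n(a|x,\Xi)]$, proved exactly as in Appendix~\ref{app:CFP_FH} but now on a per-$\Xi$ basis.

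Next I would differentiate $\phi(\bar\pi^t)$ in $t$. The first term is $\max_{\pi'}J(\mu_0,\pi',\mu^{\bar\pi^t})$; by an envelope argument (optimality of $\pi^{\BR,t}$) its $t$-derivative equals $\partial_\mu J(\mu_0,\pi^{\BR,t},\mu^{\bar\pi^t})\cdot\tfrac{d}{dt}\bar\mu^t$, so we can ignore the variation of $\pi^{\BR,t}$ itself. The second term differentiates through $\bar\pi^t$ and $\mu^{\bar\pi^t}=\bar\mu^t$. Expanding these derivatives and substituting the two identities above, each difference $J(\mu_0,\cdot,\bar\mu^t)-J(\mu_0,\cdot,\mu^{\BR,t})$-like term organises itself, after summing telescopically over $n$ and over the noise trajectories weighted by $P(\Xi)$, into $-\tfrac{1}{t}\phi(\bar\pi^t)$ plus a residual of the form
\begin{equation}
R^t \;=\; \tfrac{1}{t}\sum_{n,\Xi} P(\Xi)\sum_{x}\bigl(\mu^{\BR,t}_n(x|\Xi)-\bar\mu^t_n(x|\Xi)\bigr)\bigl(\bar r(x,\bar\mu^t_{n,\Xi},\xi_n)-\bar r(x,\mu^{\BR,t}_{n,\Xi},\xi_n)\bigr).
\end{equation}
The monotony assumption $\sum_x(\mu(x)-\mu'(x))(\bar r(x,\mu,\xi)-\bar r(x,\mu',\xi))\le 0$ applied pointwise in $(\xi,\Xi,n)$ then gives $R^t\le 0$, yielding
\begin{equation}
\tfrac{d}{dt}\phi(\bar\pi^t)\;\le\;-\tfrac{1}{t}\phi(\bar\pi^t),\qquad t\ge 1.
\end{equation}
Integrating this Grönwall-type inequality yields $\phi(\bar\pi^t)\le \phi(\bar\pi^1)/t=O(1/t)$.

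The main obstacle, in my view, is not the Lyapunov computation itself but justifying the envelope-type cancellation and the telescoping over the time index $n$ in the presence of the $\Xi$-conditioning: the dynamic-programming recursion for $Q^{\pi,\mu}_n(x,a|\Xi_n)$ mixes an expectation over the next common-noise symbol $\xi$ with the continuation value, so when expanding $J$ as a sum over trajectories one must be careful that the weights $P(\Xi.\xi)=P(\xi|\Xi)P(\Xi)$ factor out cleanly and that the monotony inequality is applied inside the innermost sum over $x$ before averaging over $\Xi$. Once that bookkeeping is done, the rest of the argument transports verbatim from Theorem~\ref{thm:fp_FH}.
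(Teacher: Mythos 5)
Your overall strategy is exactly the paper's: differentiate $\phi(\bar\pi^t)$, use the envelope argument on the max term, substitute the $\frac{d}{dt}\bar\mu$ and $\frac{d}{dt}\bar\pi$ identities per noise trajectory $\Xi$, and split the result into $-\frac{1}{t}\phi(\bar\pi^t)$ plus a residual controlled by monotonicity; the common noise indeed only adds an outer sum over trajectories weighted by $P(\Xi.\xi)$. Two issues with your residual, though. First, as written, $R^t$ pairs $\mu^{\BR,t}-\bar\mu^t$ with $\bar r(\cdot,\bar\mu^t)-\bar r(\cdot,\mu^{\BR,t})$; applying the monotonicity condition with $\mu=\mu^{\BR,t}$ and $\mu'=\bar\mu^t$ shows this quantity is $\ge 0$, not $\le 0$ --- the reward difference must be ordered consistently with the measure difference for the inequality to point the right way, so as stated the key step fails. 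Second, and more substantively, differentiating $J(\mu_0,\cdot,\bar\mu^t)$ in $t$ does not produce the finite difference $\bar r(x,\mu^{\BR,t})-\bar r(x,\bar\mu^t)$; it produces the gradient term $\langle\nabla_\mu\bar r(x,\bar\mu^t_{n|\Xi}),\frac{d}{dt}\bar\mu^t_{n|\Xi}\rangle$ contracted against $\frac{d}{dt}\bar\mu^t_{n|\Xi}(x)$, and your secant form coincides with this only when $\bar r$ is linear in $\mu$. The paper bridges the gap with Property~\ref{prop:mono-r-dmu}, which converts the Lasry--Lions inequality into its differential form $\sum_x\langle\nabla_\mu\bar r(x,\mu),\dot\mu\rangle\,\dot\mu(x)\le 0$ by taking increments along the trajectory and letting the increment vanish; it is that lemma, applied per $(n,\Xi,\xi)$, rather than the raw finite-difference condition, that kills the residual. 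With the residual written in gradient form and the signs aligned, the rest of your argument, including the Gr\"onwall step $t\,\phi(\bar\pi^t)\le\phi(\bar\pi^1)$, matches the paper's proof.
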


\section{Experiments with Common Noise}

\subsection{Linear Quadratic Mean Field Game}

\label{sec:expe-common-noise-LQ}
\textbf{Environment:} We use a similar environment as the one described in the Linear Quadratic MFG. On top of the idiosyncratic noise $\epsilon_n$, we add a common noise $\xi_n$, which is assumed to be stationary and i.i.d. We now consider the following dynamics:
\begin{align}
    x_{n+1} = x_{n} + (K(m_n - x_n) + a_n)\Delta_n + \sigma(\rho\xi_n + \sqrt{1-\rho^2}\epsilon_{n})\sqrt{\Delta_n}\;.
\end{align}
The reward remains unchanged, except that the first moment of the state distribution $\bar\mu_n$ now depends on the sequence of common noises $\Xi_n$: $m_n=\mathbb{E}[x_n | \Xi_n]$. We set $\rho=0.5$.

\begin{figure}[htbp]
    \centering
    \begin{subfigure}{0.25\textwidth}
      \centering
      \includegraphics[width=1.0\linewidth]{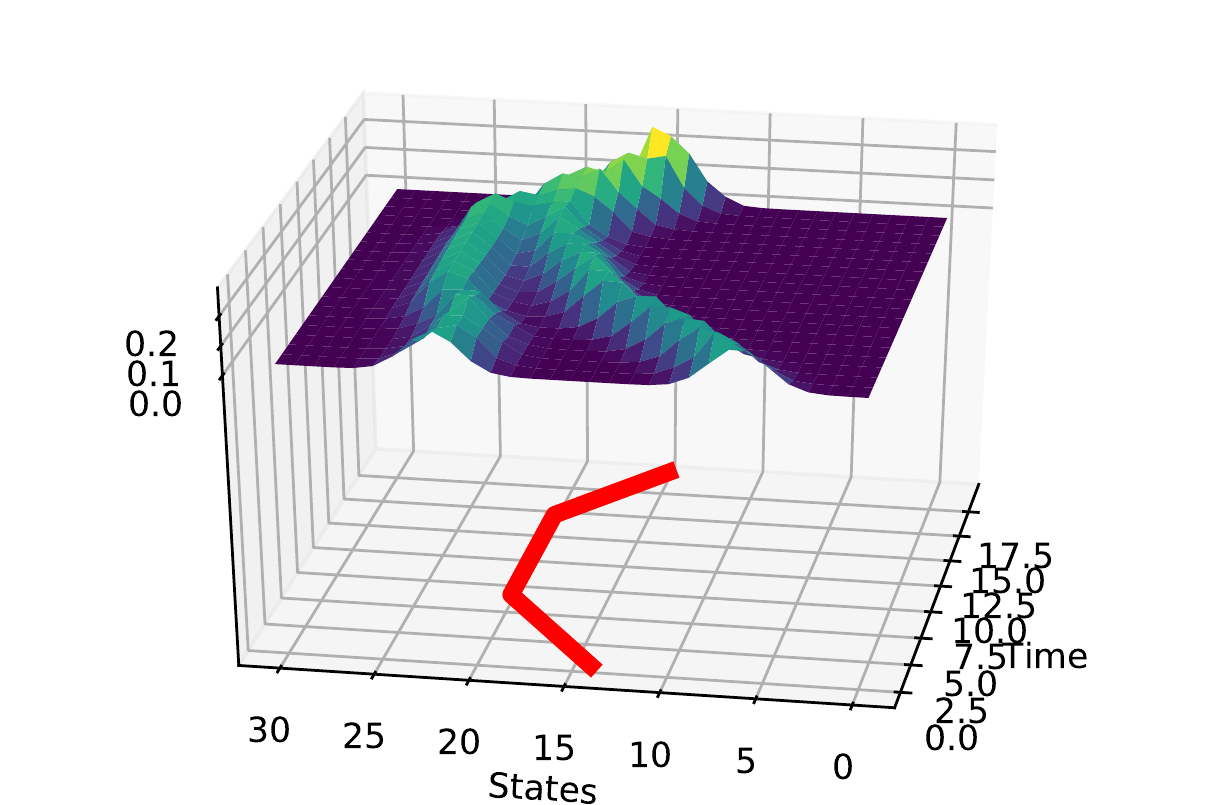}
      \caption{Exact Solution}
    \end{subfigure}%
    \begin{subfigure}{0.25\textwidth}
      \centering
      \includegraphics[width=1.0\linewidth]{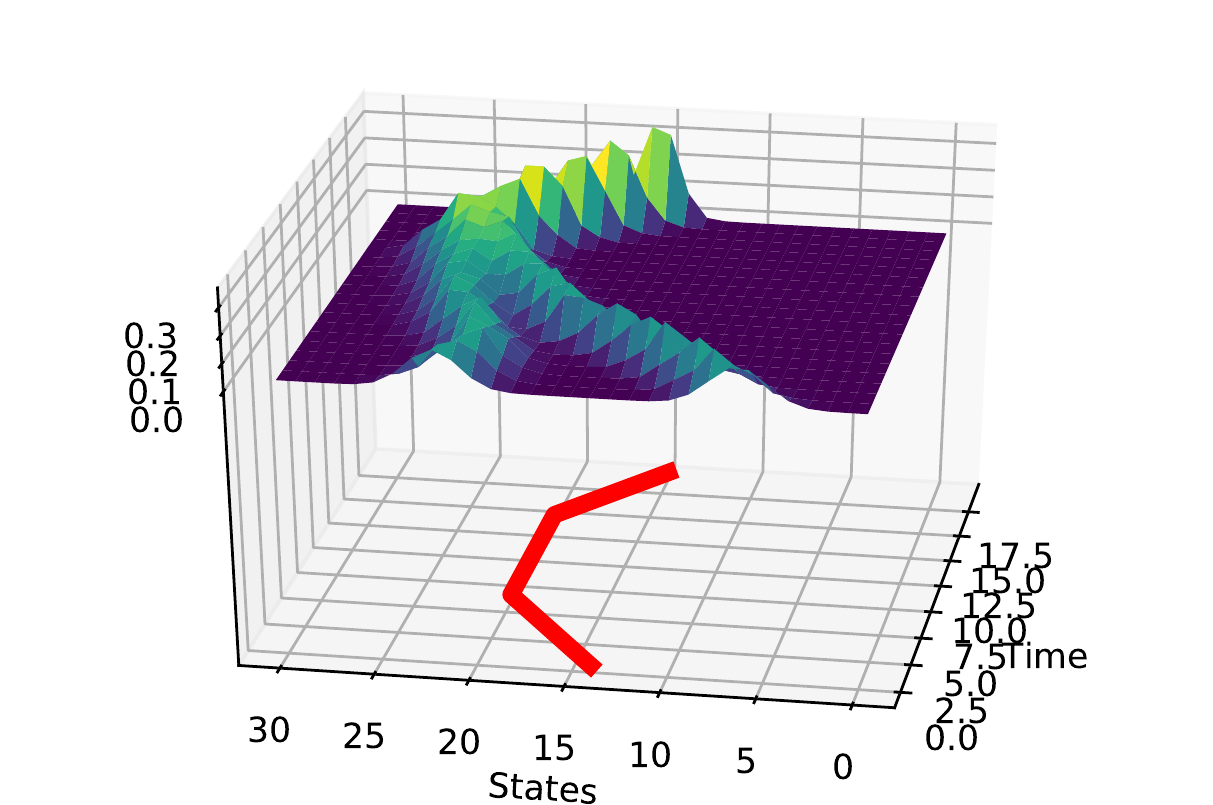}
      \caption{Model-based}
    \end{subfigure}%
    \begin{subfigure}{0.25\textwidth}
      \centering
      \includegraphics[width=1.0\linewidth]{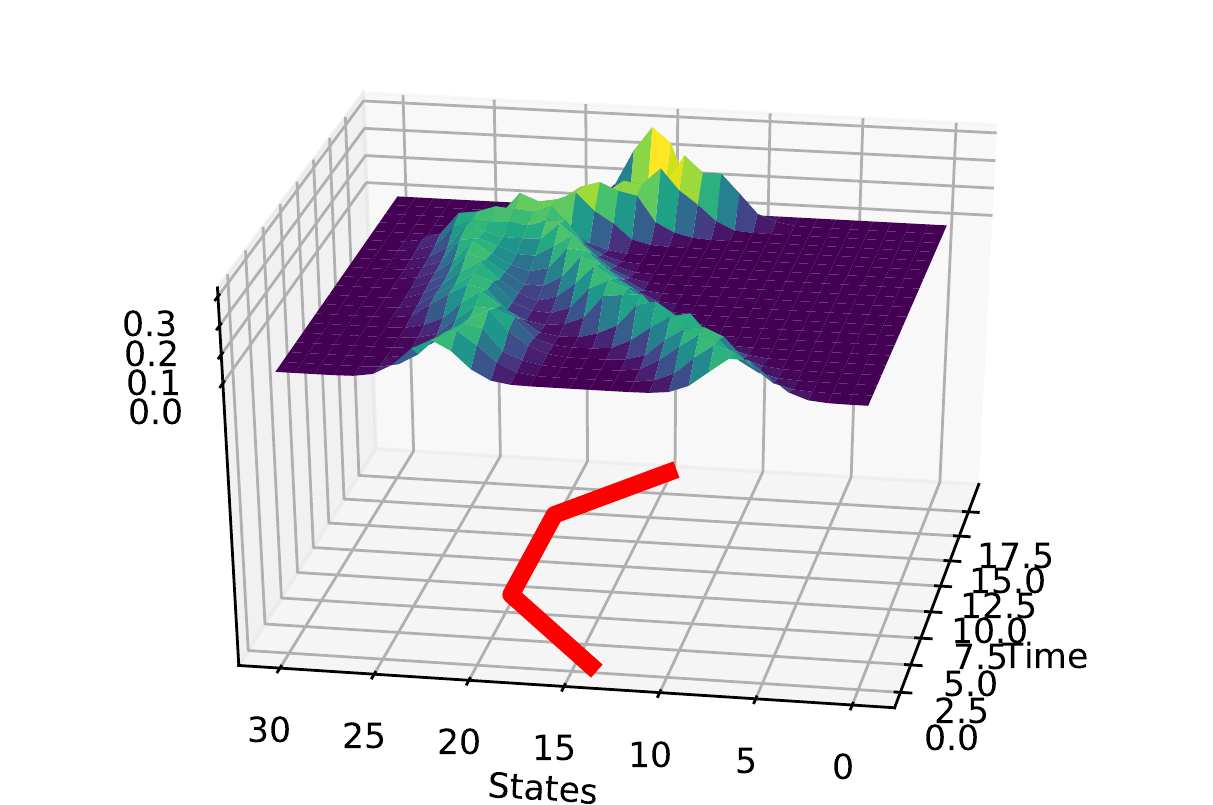}
      \caption{Model-free}
    \end{subfigure}%
    \begin{subfigure}{0.25\textwidth}
      \centering
      \includegraphics[width=1.0\linewidth]{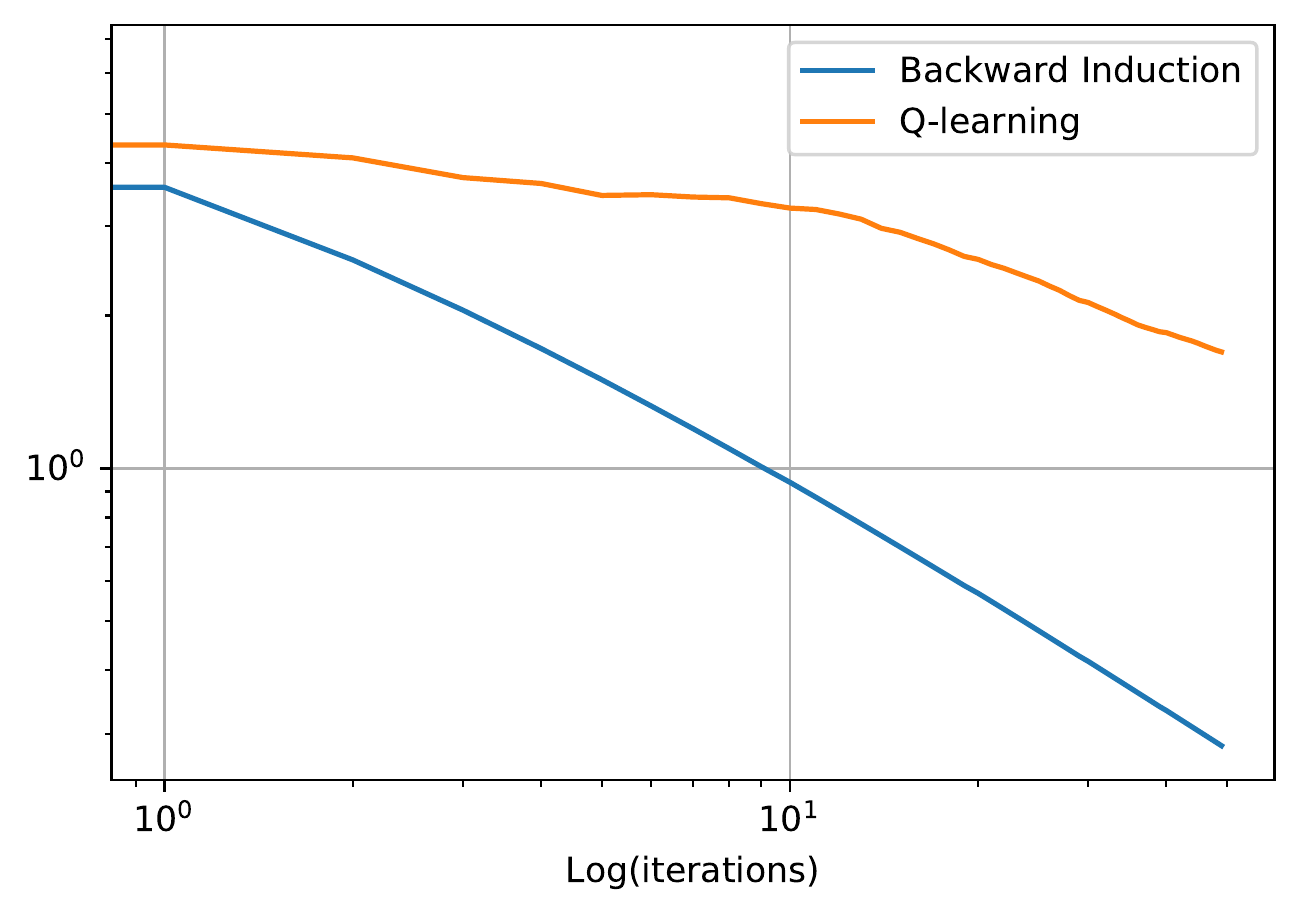}
      \caption{Exploitability}
    \end{subfigure}

    \caption{Linear Quadratic with Common Noise.}
    \label{fig:LQ_CN}
\end{figure}

\textbf{Numerical results:} On Figure \ref{fig:LQ_CN}, the two separated bell-shaped distributions reassemble and follow the sequence of common noises. Namely, the mean of the distribution moves with the successive common noises, which are represented by the red line below the distribution's evolution. This evolution can be interpreted as a school of fish which undergoes a water flow (\textit{i.e.} the sequence of common noises). Both model-based and model-free approaches approximate the exact solution. The exploitability of model-based still decreases at a rate $O(1/t)$, while the one of model-free decreases more slowly.

\subsection{The Beach Bar Process}

\textbf{Environment:} We consider a setting where the bar can close at only one given time step. This gives two possible realizations of the common noise: (1) the bar stays open or (2) it closes at this time step. Here, the dynamics remain unchanged but the reward now depends on the common noise: $r_{open}$ is the same reward as before, whereas $r_{closed}(x_n,a_n,\mu_n) = - \frac{|a_n|}{\mathcal{|X|}} - \log(\mu_n(x_n))$.

\begin{figure}[htbp]
    \centering
    \begin{subfigure}{0.33\textwidth}
      \centering
      \includegraphics[width=0.8\linewidth]{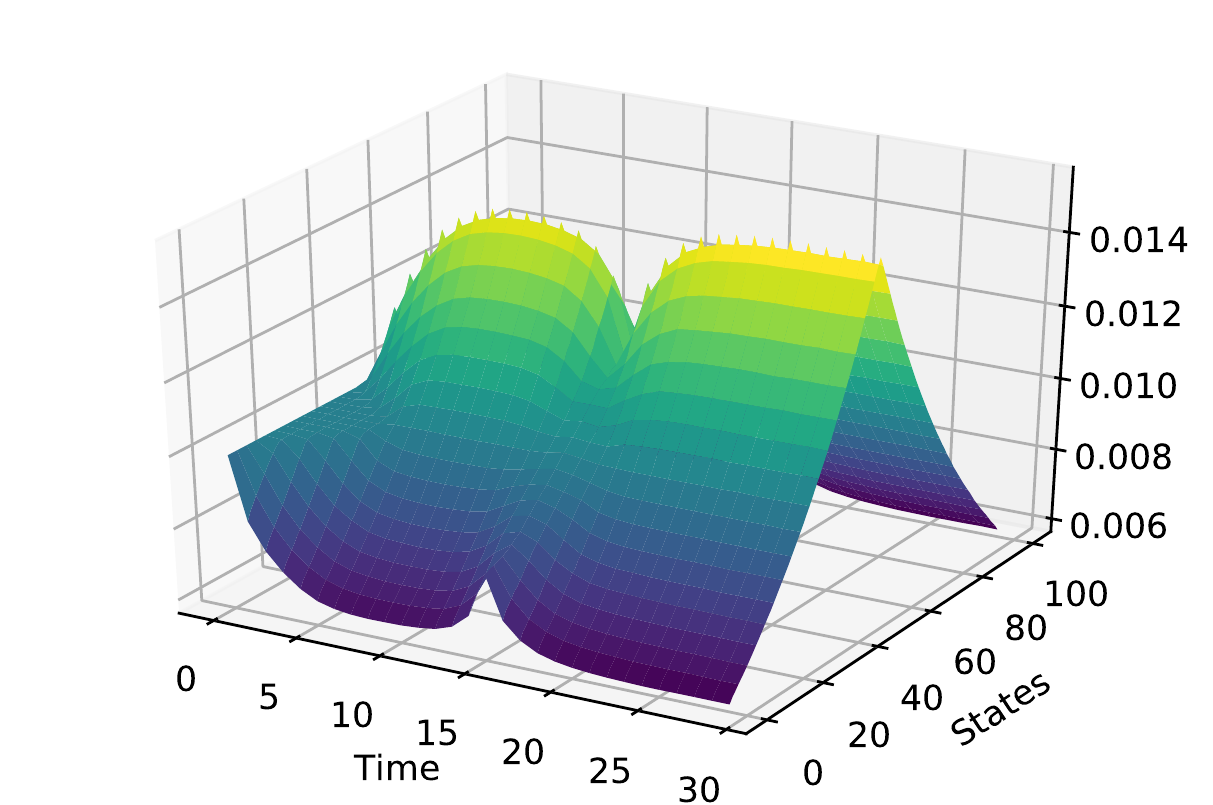}
      \captionsetup{width=.9\textwidth}
      \caption{Model-based, the bar stays open}
    \end{subfigure}%
    \begin{subfigure}{0.33\textwidth}
      \centering
      \includegraphics[width=0.8\linewidth]{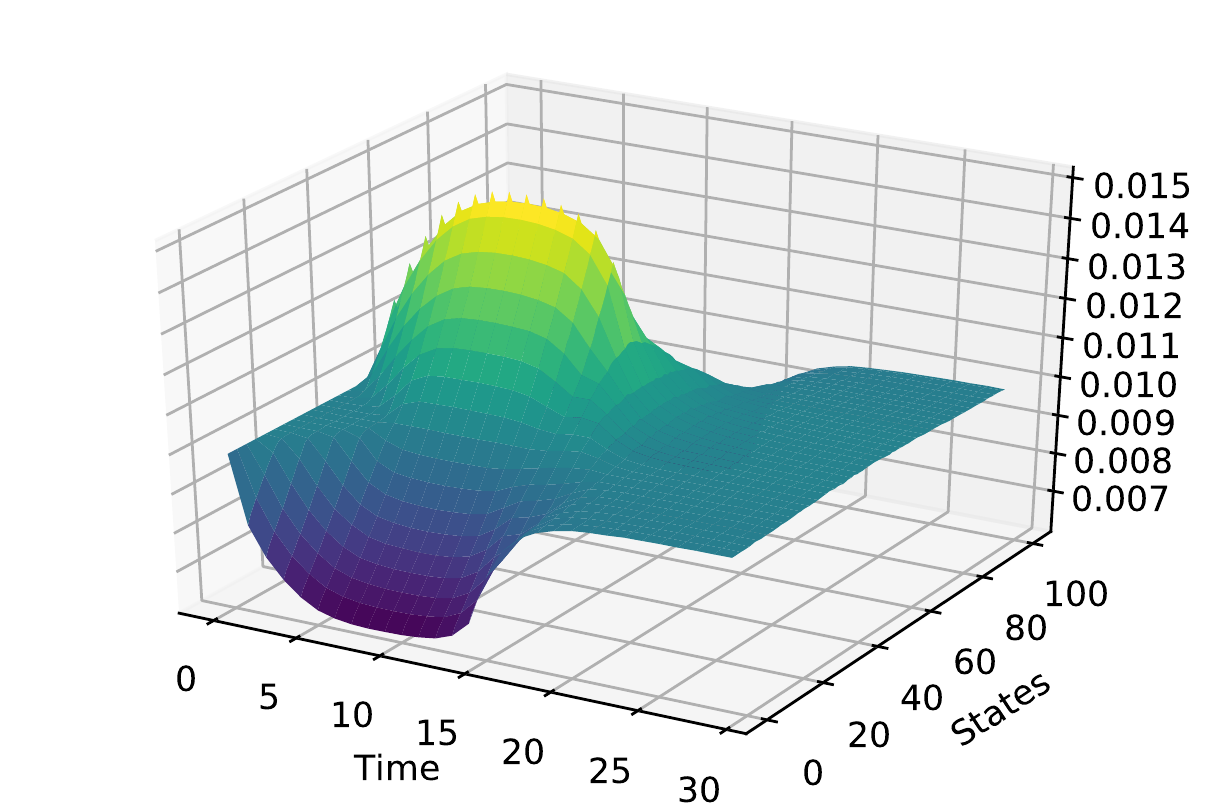}
      \captionsetup{width=.9\textwidth}
      \caption{Model-based, the bar closes}
    \end{subfigure}%
    \begin{subfigure}{0.33\textwidth}
      \centering
      \includegraphics[width=0.8\linewidth]{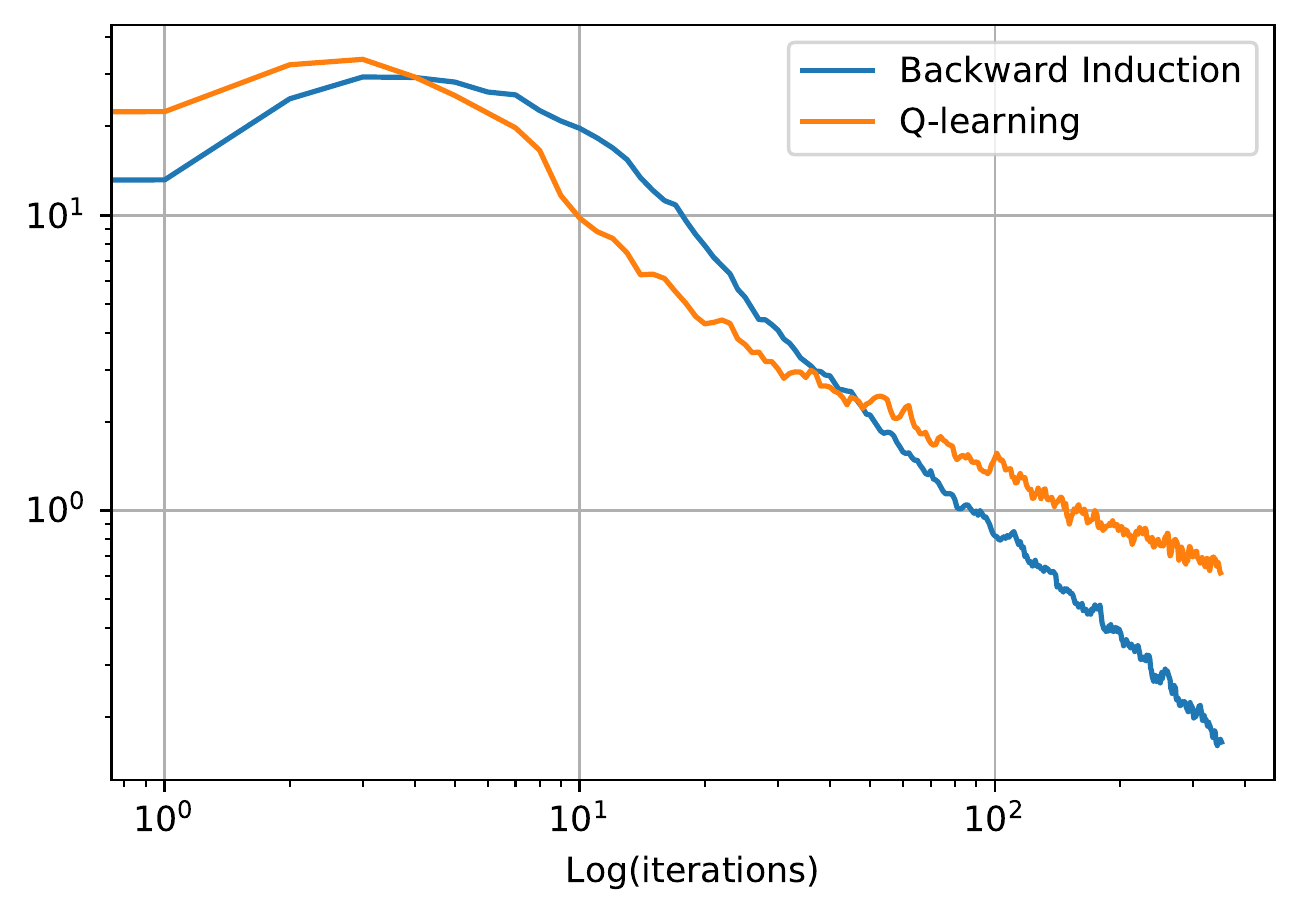}
      \caption{Exploitability}
    \end{subfigure}
    
    \medskip
    
    \begin{subfigure}{0.33\textwidth}
      \centering
      \includegraphics[width=0.8\linewidth]{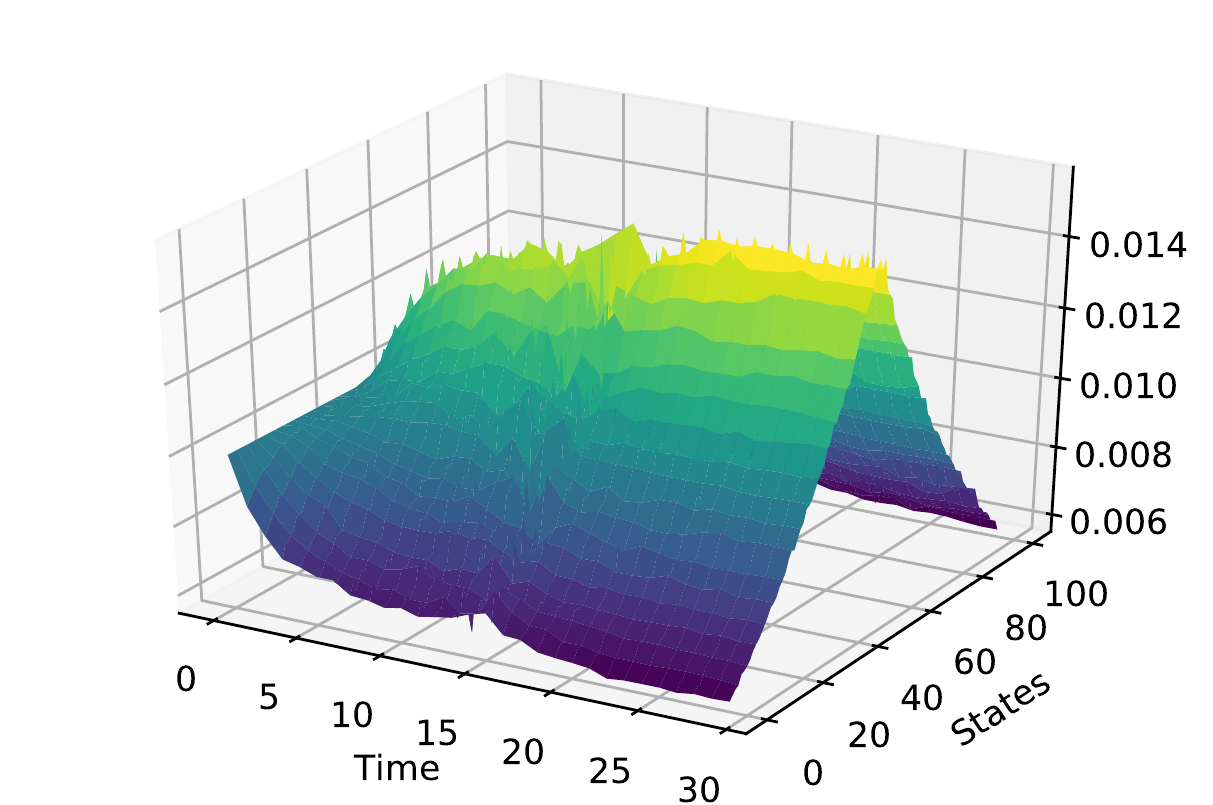}
      \caption{Model-free, the bar stays open}
    \end{subfigure}%
    \begin{subfigure}{0.33\textwidth}
      \centering
      \includegraphics[width=0.8\linewidth]{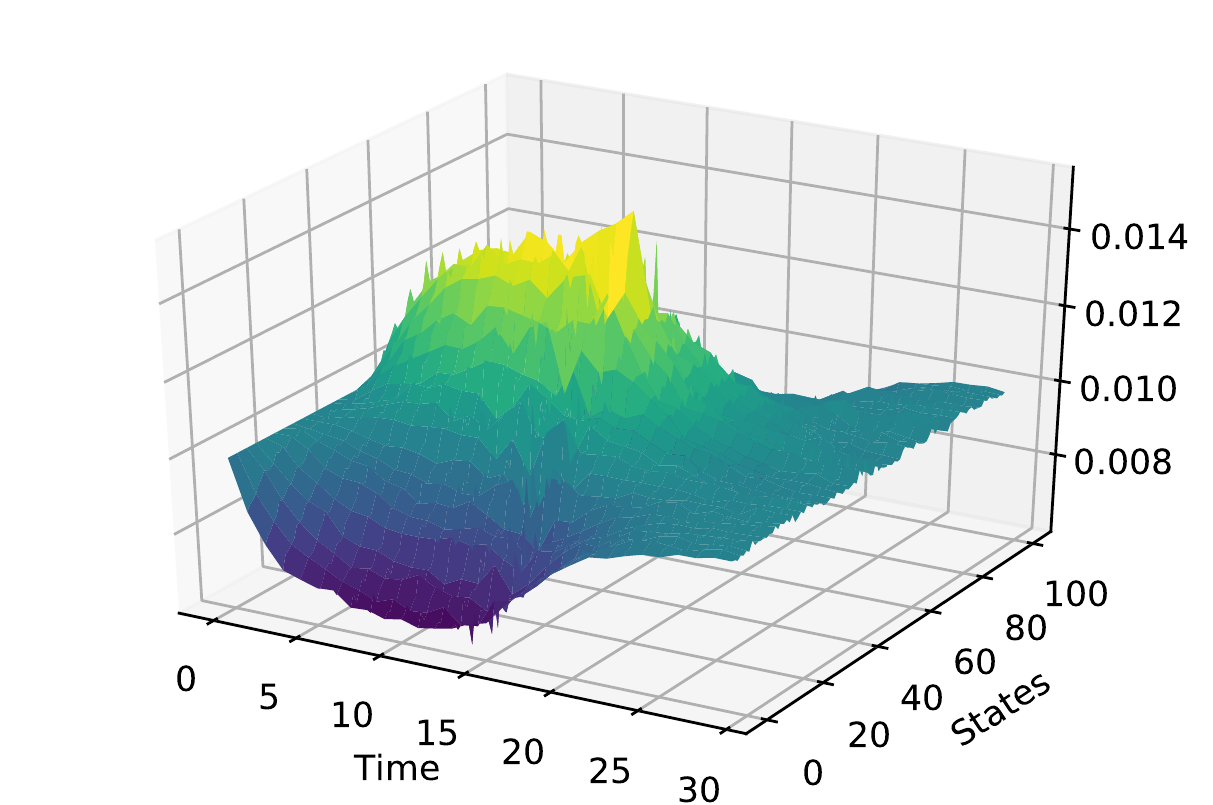}
      \caption{Model-free, the bar closes}
    \end{subfigure}%
    \caption{First Common Noise setting, the bar has a probability $0.5$ of closing at time step $15$.}
    \label{fig:1CN}
\end{figure}
\textbf{Numerical results:} We set the time step of closure at $\frac{N}{2}$ where $N=30$ is the horizon of the game and the number of states $\mathcal{|X|}$ to $100$. We choose the probability of closure to be $0.5$. Figure \ref{fig:1CN} shows that the players anticipate the possibility that the bar may close: the density of people next to the bar decreases before the time step of the common noise. After the common noise, the distribution becomes uniform if the bar has closed or people go back next to the bar if the bar stays open. Once again, the exploitability indicates that the model-based and model-free approaches both converge to the Nash equilibrium and that the model-based converges faster.

\section{Experiment at Scale}

\begin{figure}[htbp]
    \centering
    \begin{subfigure}{0.25\textwidth}
      \centering
      \includegraphics[width=1.0\linewidth]{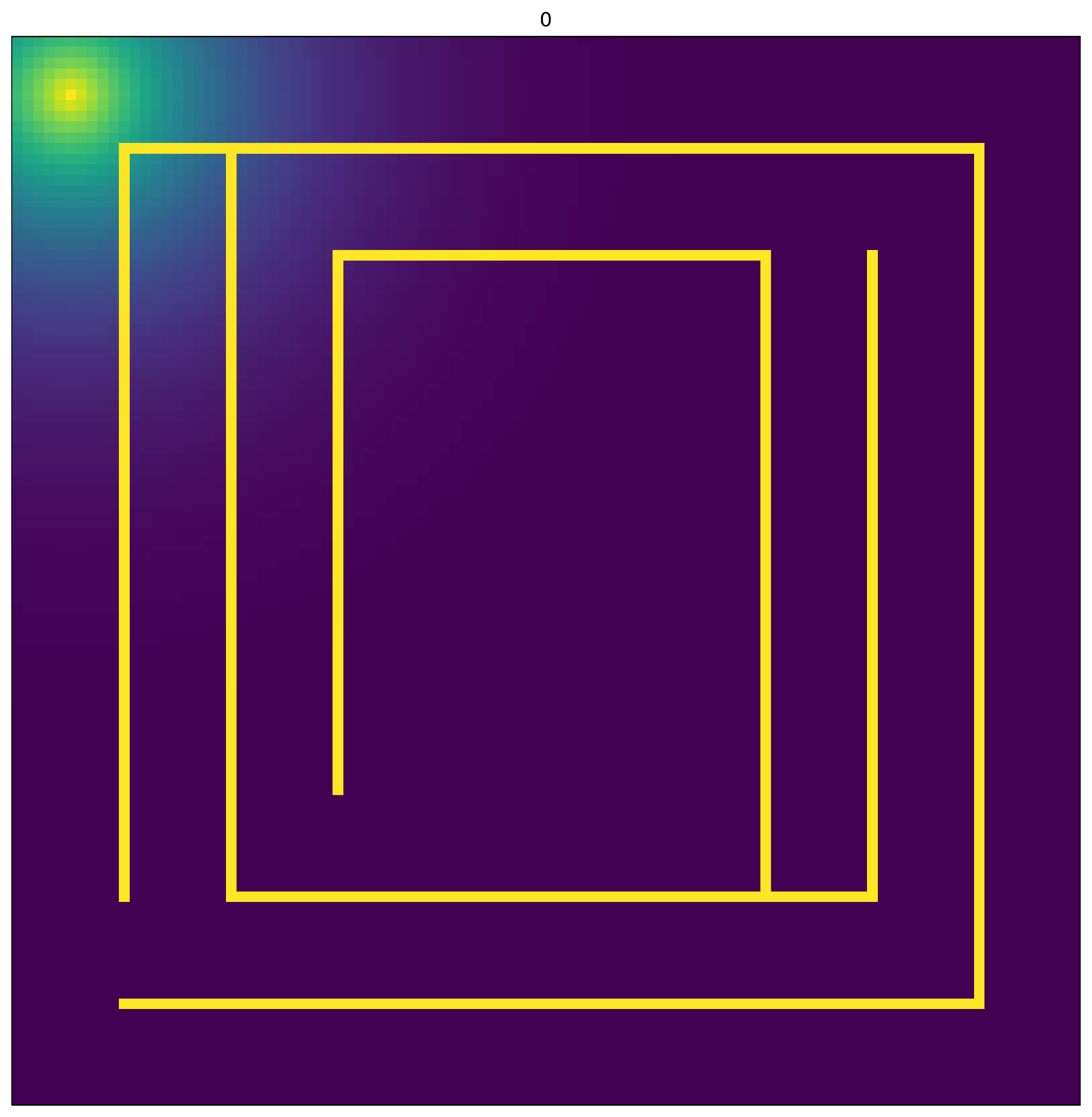}
      \caption{Start}
    \end{subfigure}%
    \begin{subfigure}{0.25\textwidth}
      \centering
      \includegraphics[width=1.0\linewidth]{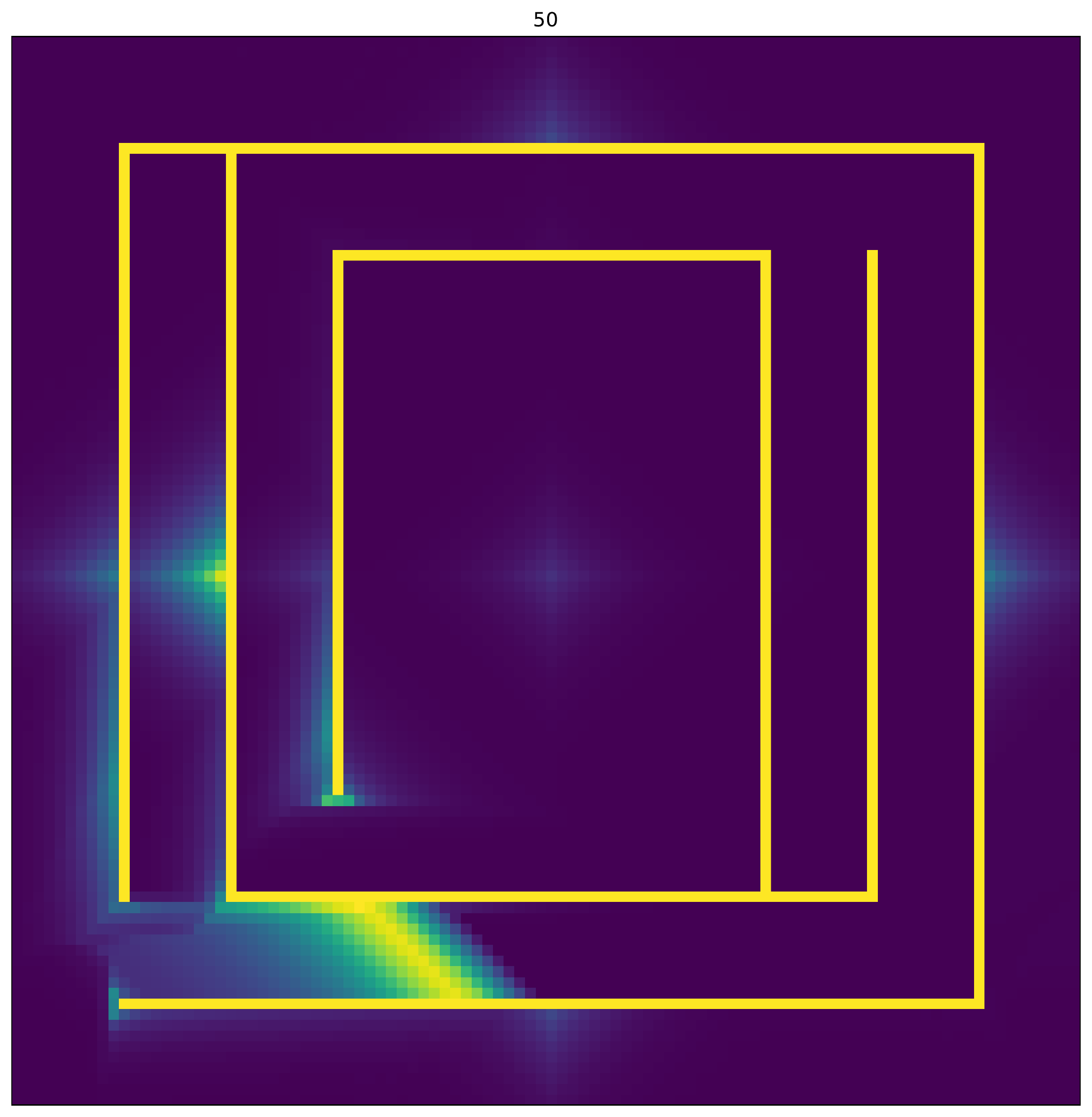}
      \caption{Middle}
    \end{subfigure}%
    \begin{subfigure}{0.25\textwidth}
      \centering
      \includegraphics[width=1.0\linewidth]{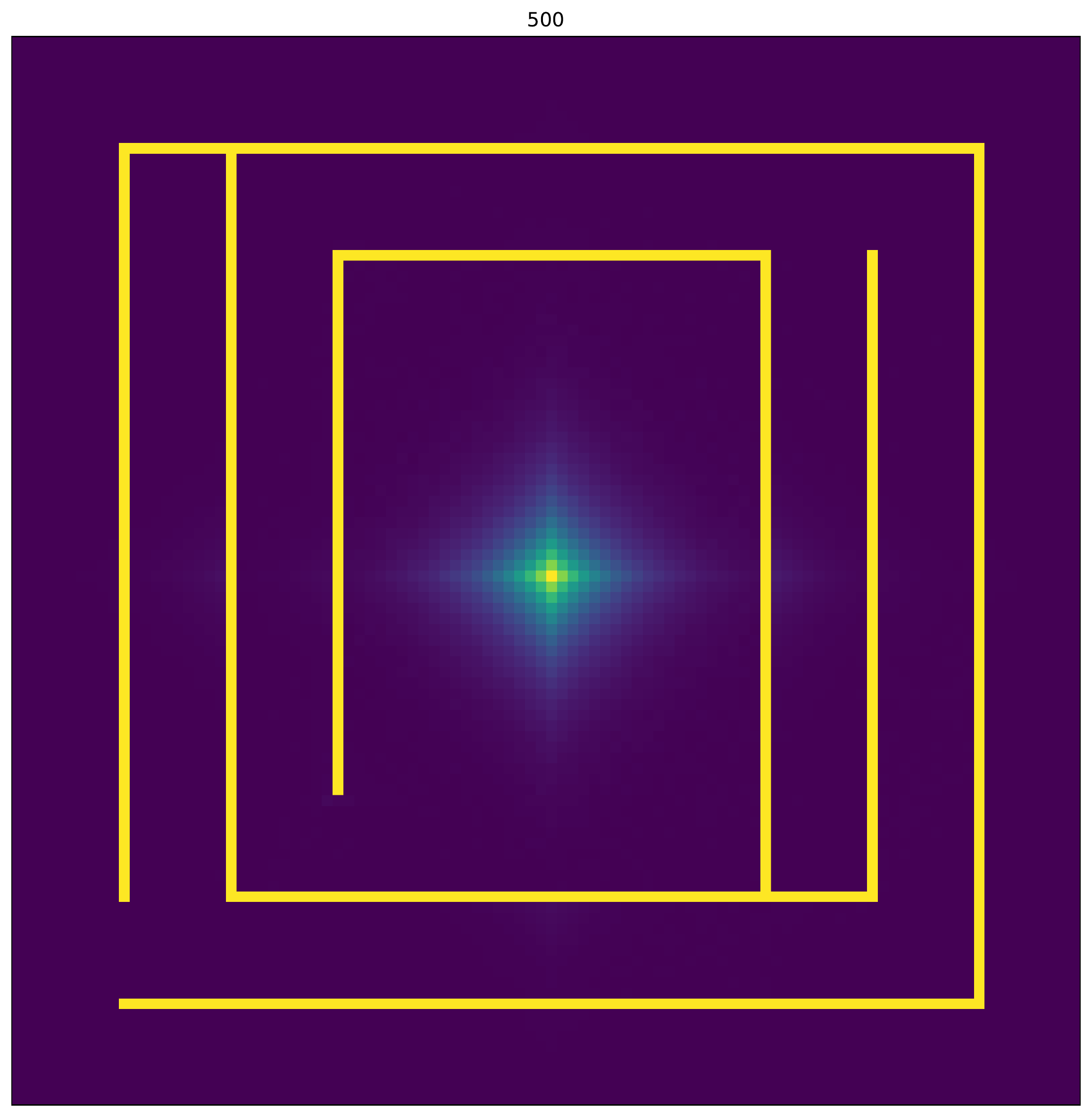}
      \caption{End}
    \end{subfigure}%
    \begin{subfigure}{0.25\textwidth}
      \centering
      \includegraphics[width=1.0\linewidth]{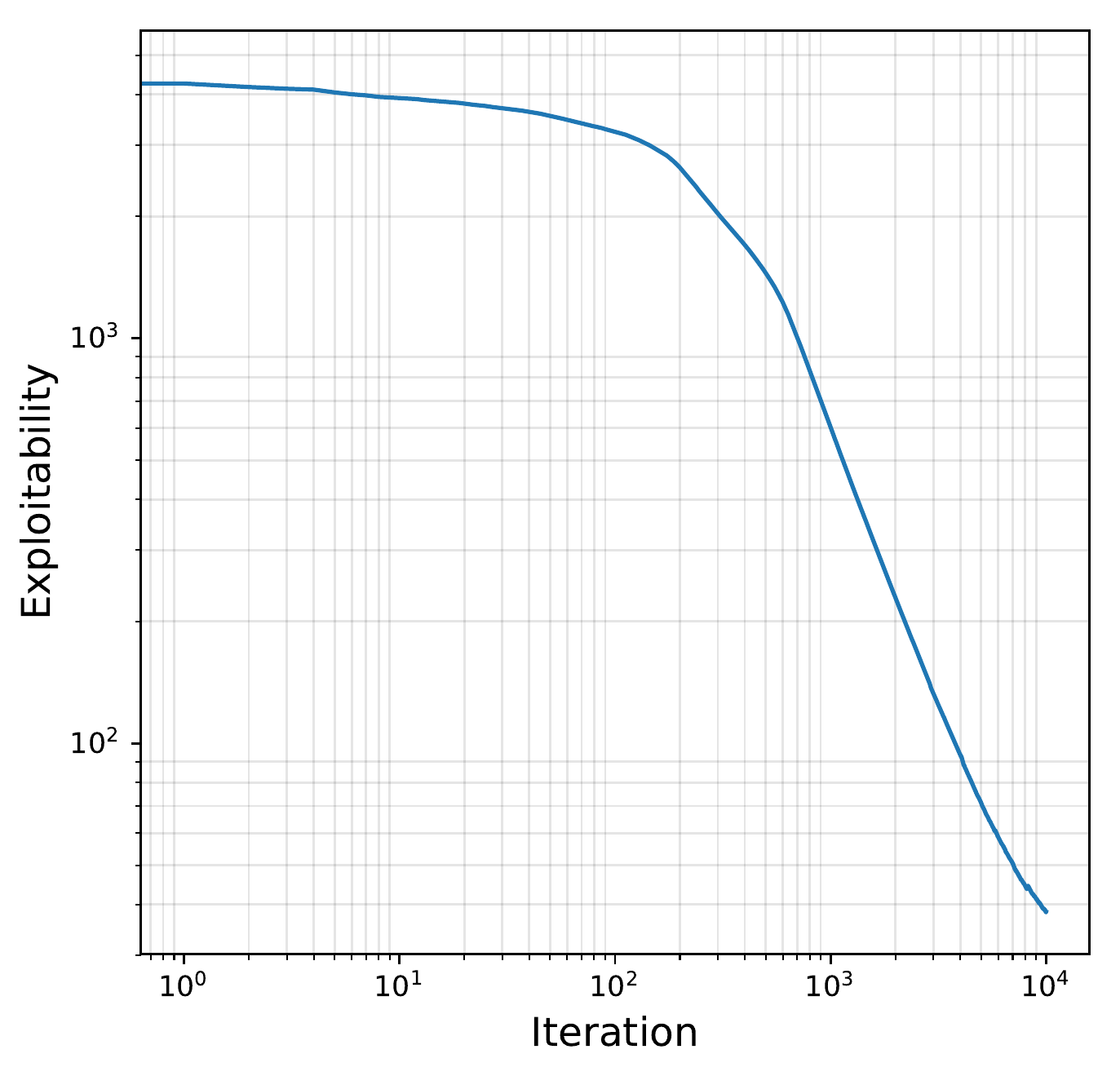}
      \caption{Exploitability}
    \end{subfigure}

    \caption{2D crowd modeling example.}
    \label{fig:2D_experiment}
\end{figure}

We finally present a crowd modeling experiment, motivated by swarm robotics (see e.g.~\cite{mcguire2019minimal,szymanski2006distributed,ducatelle2014cooperative}), where a distribution of players is encouraged to move in a maze towards the center of a $100\times 100$ grid. The reward at a state $(i, j)$ is described as $r(s=(i,j), a, \mu) = 10*(1-\frac{\|(i, j)-(50, 50)\|_1}{100}) - \frac{1}{2} \log(\mu(x))$,
where the last term captures the aversion for crowded areas. The initial distribution is chosen proportional to $(1-\frac{\|(i, j)-(5, 5)\|_2}{\sqrt{2\times 95^2}})^{10}$ while being null on the maze obstacles (the yellow strait lines). The evolution of the distribution as well as the exploitability are represented in Figure~\ref{fig:2D_experiment} (a video is available in supplementary material).

\section{Related Work}
\textbf{Theoretical results in MFGs:}
Theoretical results in terms of uniqueness, existence and stability of Nash equilibrium in such games are numerous, see  \cite{cardaliaguet2010notes,MR3134900,carmona2018probabilisticI-II}. A key motivation is that the optimal control derived in an MFG provides an approximate Nash equilibrium in a game with a large but finite number of players. In general, most games are considered in a continuous setting while Gomes \textit{et al.} \cite{gomes2010discrete} proved existence results for finite state and action spaces MFGs and~\cite{MR3880245} considered finite state discounted cost MFGs. An important and challenging extension is the case of players sharing a common source of risk (such as several companies in the same economy market), giving rise to the so-called MFG with common noise, see \cite{carmona2016mean} or  \cite[Volume II]{carmona2018probabilisticI-II}. These games are usually solved by numerical methods for partial differential equations~\cite{achdoulauriere2020mfgnumerical} or probabilistic methods \cite{angiuli2019cemracs,CarmonaLauriere_DL,fouque2019deep}.

\textbf{Learning in games and MFGs:} The scaling limitations of traditional multi-agent learning methods with respect to the number of players remain quite hard to overcome as the complexity of independent learning methods~\cite{foerster2018learning, perolat2018actor, perolat2020poincar, srinivasan2018actor, omidshafiei2019neural, foerster2018counterfactual, foerster2017stabilising} scales at least linearly with the number of players and some methods may scale exponentially (\textit{e.g.} Nash $Q$-learning~\cite{hu2003nash} or correlated $Q$-learning~\cite{greenwald2003correlated}). By approximating the discrete population by a continuous one, the MFG scheme made learning approaches more suitable and attracted a surge of interest.
Model-based methods have been first considered (\textit{e.g.} ~\cite{yin2010learning} studied a MF oscillator game,~\cite{cardaliaguet2017learning} initiated the study of Fictitious Play in MFGs). Recently, several works have focused on model-free methods such as $Q$-learning~\cite{guo2019learning} but the convergence results rely on very strong hypotheses. Note that, although our method can make use of $Q$-learning to learn a best response, it does not rely on it. Also, our method can make use of both model-based and model-free algorithms. Finally, our method relies only on the Lasry-Lions monotonicity condition, which is much less restrictive than a potential or variational structure.
 
Fictitious Play (FP), which is also a classical method to learn in $N$-player games~\cite{robinson1951iterative, ostrovski2013payoff, harris1998rate, hofbauer2002global, heinrich2015fictitious, perolat2018actor}, combined with a model-free algorithm has been considered in \cite{mguni2018decentralisedli} but with several inaccuracies, as already pointed out in~\cite{subramanianpolicy}, which focuses on policy gradient methods. 
However, they study a restricted stationary setting as opposed to the finite time horizon covered by our contribution and their convergence results hold under hardly verifiable assumptions. 

Convergence of approximate FP has been proved in~\cite{elie2020convergence} (based on the FP analysis of~\cite{hadikhanloo2019finite}) but without common noise and their analysis is for discrete time FP and only for first-order MFGs (without noise in the dynamics). Our analysis, done in continuous time, is more transparent and works for MFGs with both idiosyncratic and common sources of randomness in the dynamics. Furthermore, their numerical example was stationary whereas we were also able to learn the solution of time-dependent MFGs, which covers a larger scope of meaningful applications. Finally, our analysis provides a rate of convergence ($O(\frac{1}{t})$) while previous FP work in MFG do not.
\section{Conclusion}
In this paper we have shown that Fictitious Play can serve as a basis for building practical algorithms to solve a wide variety of MFGs including finite horizon and $\gamma$-discounted MFGs as well as games perturbed by a common noise. We proved that, in all these settings, the resulting exploitability decreases at a rate of $O(\frac{1}{t})$ and that this metrics can be used to monitor the quality of the control throughout the learning. To illustrate our findings and the versatility of the method, we  instantiated the Fictitious Play scheme using Backward Induction and $Q$-Learning to learn intermediate best responses. Application of these instances on different MFGs have shown that the proposed algorithms consistently learned a near-optimal control and led to the desired behaviour for the population of players. This scheme has the potential to scale up dramatically by using advanced reinforcement learning algorithms combined with neural networks for the computation of the best response.

\newpage

\section*{Broader Impact}

    \textbf{Applications of MFGs:} The MFG model has inspired numerous applications \cite{gueant2011mean} and we hope our work can help practitioners to solve MFGs problems at scale. A popular application focuses on population dynamics modeling~\cite{achdou2017mean,cardaliaguet2016segregation} including crowd motion modeling \cite{achdou2019mean, burger2013mean,djehiche2017mean,aurell2019modeling,achdou2016mean,chevalier2015micro}, opinion dynamics and consensus formation \cite{stella2013opinion,bauso2016opinion,parise2015network}, autonomous vehicles \cite{huang2019game,shiri2019massive} or sanitary vaccination \cite{hubert2018nash,elie2020contact}.
But MFGs have also naturally found applications in banking, finance and economics including banking systemic risk \cite{MR3325083,elie2020large}, high frequency trading \cite{lachapelle2016efficiency,cardaliaguet2018mean},
    income and wealth distribution \cite{achdou2017income}, economic contract design \cite{elie2019tale}, economics in general \cite{achdou2017income,achdou2014pde,chan2015bertrand,gomes2014socio, djehiche2016mean} or price formation \cite{MR2295621,lachapelle2016efficiency,gomes2020mean}. Energy management or production applications are studied in \cite{alasseur2020extended,couillet2012electrical,elie2019mean, bagagiolo2014mean,kizilkale2019integral,li2016mean, gueant2011mean,achdou2016long,chan2017fracking,graber2018existence}, whereas security and communication applications appear in \cite{meriaux2012mean,samarakoon2015energy,hamidouche2016mean,yang2017mean, kolokoltsov2016mean,kolokoltsov2018corruption}.

\textbf{Exploitability as a metric:} One of the leading factor of progress for numerical or learning methods is the clear understanding of which metrics should be optimized. In reinforcement learning, the mean human normalized score is a standard metric of success. In supervised learning, the top 1 accuracy has been the foremost metric of success. We hope the exploitability can achieve such a role on the numerical aspects of MFGs.

\bibliographystyle{plain}
\bibliography{biblio.bib}

\newpage
\appendix
\section{Continuous Time Fictitious Play in Finite Horizon}
\label{app:CFP_FH}
In this section, we prove the Fictitious Play convergence result in the absence of common noise. For the sake of clarity, we will write:
$$r^\pi(x, \mu) = \mathbb{E}_{a \sim \pi(.|x)}\left[r(x, a, \mu)\right] \quad \mbox{ and }  \quad p^\pi(x'|x) = \mathbb{E}_{a \sim \pi(.|x)}\left[p(x'|x,a)\right]$$ for the rest of this section.

First, we prove the following property, which stems from monotonicity.

\begin{property}
\label{prop:mono-r-dmu}
Let $f$ be a smooth enough function and let assume that the ODE $\dot \mu = f(\mu)$ (with $\dot \mu = \frac{d}{dt} \mu$) has a solution $(\mu^t)_{t \ge 0} = (\mu^t_n(x))_{t \ge 0, x \in \mathcal{X}}$.
If the game is monotone, then: 
$$\sum \limits_{x \in \mathcal{X}}<\nabla_{\mu} \bar r(x,\mu), \dot \mu> \dot \mu(x) \leq 0.
$$
\end{property}
\begin{proof}
The monotonicity condition implies that, for all $\tau \geq 0$, we have:
$$\sum \limits_{x \in \mathcal{X}} (\mu^t(x) - \mu^{t+\tau}(x))(\bar r(x,\mu^t) - \bar r(x,\mu^{t+\tau}))\leq 0.
$$
Thus:
$$\sum \limits_{x \in \mathcal{X}} \frac{\mu^t(x) - \mu^{t+\tau}(x)}{\tau}\frac{\bar r(x,\mu^t) - \bar r(x,\mu^{t+\tau})}{\tau}\leq 0.
$$
The result follows when $\tau \rightarrow 0$.
\end{proof}

\begin{property}
\label{prop:avg-policy}
Let $\hat\pi^t = (\hat\pi^t_n)_{n=0,\dots,N}$ be a sequence of time-dependent policies and let $\mu^{\hat\pi^t} = (\mu^{\hat\pi^t}_n(x))_{n=0,\dots,N, x \in \mathcal{X}}$ be the sequence of their distributions over states.
Let us denote, for all $t,n,x$, $\bar \mu_n^t(x) = \frac{1}{t} \int \limits_{0}^t \mu_n^{\hat\pi^s}(x) ds$. Then, the policy generating this average distribution is:
\begin{equation}
\label{eq:bar-pi-nt}
    \bar \pi_n^t(a|x) = \frac{\int \limits_{0}^t \mu_n^{\hat\pi^s}(x) \hat\pi_n^s(a|x) ds}{\int \limits_{0}^t \mu_n^{\hat\pi^s}(x) ds}.
\end{equation}
Note that $\int \limits_{0}^t \mu_n^{\hat\pi^s}(x) ds$ can be chosen to be strictly positive as one can choose an arbitrary policy on the time interval $[0, 1]$ (for example, the uniform policy).

Or, more simply, one can write:
\begin{equation}
    \label{eq:bar-pi-nt-integral}
    \bar \mu_n^{t}(x) \bar \pi_n^t(a|x) = \frac{1}{t} \int \limits_{0}^t \mu_n^{\hat\pi^s}(x) \hat\pi_n^s(a|x) ds.
\end{equation}
Moreover, we have:
\begin{align}
    \dot {\bar \mu}_n^t(x) \bar \pi_n^t(a|x) + \bar \mu_n^t(x) \dot {\bar \pi}_n^t(a|x) = \frac{1}{t}\left[ \mu_n^{\hat\pi^t}(x) \hat\pi^t(a|x)-\bar \mu_n^{\hat\pi^t}(x) \bar \pi_n^t(a|x)\right]. %\label{dt_pin_mun}
\end{align}
\end{property}

\begin{proof}
Let us start with the following equality, which holds by definition of the dynamics:

\begin{align}
    &\mu_{n+1}^{\hat\pi^s} (x') = \sum \limits_{x \in \mathcal{X}} \sum \limits_{a \in \mathcal{A}} p(x'|x, a)\hat\pi_n^s(a|x) \mu_{n}^{\hat\pi^s} (x).
\end{align}
Then, taking on both sides the average over the Fictitious Play time yields:
\begin{align}
    &\frac{1}{t}\int \limits_{0}^{t} \mu_{n+1}^{\hat\pi^s} (x') ds 
    = \sum \limits_{x \in \mathcal{X}} \sum \limits_{a \in \mathcal{A}} p(x'|x, a) \frac{1}{t}\int \limits_{0}^{t} \hat\pi_n^s(a|x) \mu_{n}^{\hat\pi^s} (x) ds.
\end{align}

The left hand side is $\bar \mu_{n+1}^{t}(x')$ by definition, and the time average in the right hand side can be written as:
$$
    \frac{1}{t}\int \limits_{0}^{t} \hat\pi_n^s(a|x) \mu_{n}^{\hat\pi^s} (x) ds
    =
    \frac{\int \limits_{0}^{t} \hat\pi_n^s(a|x) \mu_{n}^{\hat\pi^s} (x) ds}{\int \limits_{0}^{t} \mu_n^{\hat\pi^s} (x) ds} \frac{1}{t}\int \limits_{0}^{t} \mu_n^{\hat\pi^s} (x) ds
    =
    \bar \mu_n^{\hat\pi^t}(x) \bar \pi_n^t(a|x).
$$
Combining the terms, we obtain:
$$
    \bar \mu_{n+1}^{t}(x') 
    = 
    \sum \limits_{x \in \mathcal{X}} \sum \limits_{a \in \mathcal{A}} p(x'|x, a) \bar \mu_n^{\hat\pi^t}(x) \bar \pi_n^t(a|x),
$$
which proves that the policy $\bar \pi_n^t$ defined in~\eqref{eq:bar-pi-nt} indeed generates $\bar \mu_n^t$. The other equalities in the statement can be deduced from here readily.
\end{proof}

Based on the above properties, we now proceed to the proof of the convergence of Fictitious Play (Theorem~\ref{thm:fp_FH}) in the finite horizon case.
\begin{proof}[Proof of Theorem~\ref{thm:fp_FH}]
To alleviate the notation, given a policy $\pi$, we denote:
$$
    r^\pi(x,\mu) = \sum_{a} \pi(a|x) r(x,a,\mu).
$$
We start by noticing that, thanks to the structure of the reward coming from the monotonicity assumption,
\begin{equation}
    \label{eq:d-r-tmp}
    \nabla_{\mu}r^{\pi_n^{\BR, t}}(x, \bar \mu_n^t) = \nabla_{\mu}\bar r(x,\bar  \mu_n^t) \textrm{ and } \nabla_{\mu}r^{\pi_n}(x,\bar \mu_n^t) = \nabla_{\mu}\bar r(x, \bar \mu_n^t).
\end{equation}
Moreover, from Property~\ref{prop:avg-policy} with $\pi$ replaced by $\hat\pi^{\BR}$ and $\mu^{\hat\pi^t}$ replaced by $\mu^{\BR,t}$, we obtain~\eqref{eq:bar-pi-nt-integral}. Dropping the overlines to alleviate the presentation (so $\mu^t$ and $\pi^t$ denote respectively the average sequence of distributions and the average sequence of policies), it implies:
\begin{equation}
    \label{eq:d-pi-tmp}
    \mu_n^t(x) \frac{d}{dt} \pi_n^t(a|x)  = \frac{1}{t} \mu^{\BR,t}_n(x) [\pi^{\BR,t}_n(a|x) - \pi_n^t(a|x)].
\end{equation}

Moreover, recall that:
\begin{equation}
    \label{eq:d-mu-tmp}
    \frac{d}{dt} \mu_n^t(x)
    = \frac{1}{t}\left[\mu^{\BR, t}_n(x) -\mu_n^t(x)\right].
\end{equation}
From the above observations, we deduce successively:
\begin{align*}
    \frac{d}{dt} \phi(\pi^t) 
    &= \frac{d}{dt}\left[\max \limits_{\pi'} J(\mu_0, \pi', \mu^{t}) - J(\mu_0, \pi^t, \mu^{t})\right]
    \\
    &=\sum \limits_{n=0}^{N} \sum \limits_{x \in \mathcal{X}} \Big[<\nabla_{\mu}r^{\pi_n^{\BR}}(x, \mu_n^t), \frac{d}{dt} \mu_n^t>\mu^{\BR, t}_n(x) - <\nabla_{\mu}r^{\pi_n}(x, \mu_n^t), \frac{d}{dt} \mu_n^t>\mu_n^t(x)
    \\
    &\qquad\qquad\qquad\qquad - <\frac{d}{dt} \pi_n^t(.|x), r(x,. ,\mu_n^t)>\mu_n^t(x) - r^{\pi_n}(x, \mu_n^t) \frac{d}{dt} \mu_n^t(x)\Big]
    \\
    &= \sum \limits_{n=0}^{N} \sum \limits_{x \in \mathcal{X}} \left[t <\nabla_{\mu}\bar r(x, \mu_n^t), \frac{d}{dt} \mu_n^t> \frac{1}{t}\left(\mu^{\BR, t}_n(x)-\mu_n^t(x) \right)\right]
    \\
    &\qquad+ \sum \limits_{n=0}^{N} \sum \limits_{x \in \mathcal{X}}  \left[\frac{1}{t}r^{\pi_n}(x, \mu_n^t)\mu_n^t(x) - \frac{1}{t}r^{\pi_n^{\BR,t}}(x,\mu_n^t)\mu_n^{\BR, t}(x)\right]
    \\
    &=  - \frac{1}{t} \phi(\pi^t) + \sum \limits_{n=0}^{N} \sum \limits_{x \in \mathcal{X}} \left[t <\nabla_{\mu}\bar r(x, \mu_n^t), \frac{d}{dt} \mu_n^t> \frac{d}{dt} \mu_n^t(x)\right],
\end{align*}
where the third equality holds by~\eqref{eq:d-r-tmp}, ~\eqref{eq:d-pi-tmp} and~\eqref{eq:d-mu-tmp}. Note that the product $<\nabla_{\mu}\bar r(x, \mu_n^t), \frac{d}{dt} \mu_n^t>$ in the last sum above is non-positive thanks to Property~\ref{prop:mono-r-dmu} (\textit{i.e.}, thanks to the monotonicity assumption). Hence, the conclusion holds.
\end{proof}

\section{Continuous Time Fictitious Play in Finite Horizon with Common Noise}
In this section, we prove the convergence result of continuous time Fictitious Play in finite horizon MFGs with common noise (Theorem~\ref{thm:fp_FHCN}). The reasoning is similar as in the finite horizon case without common noise (Appx.~\ref{app:CFP_FH}). The only difference comes from the conditioning with the common noise.

\begin{proof}[Proof of Theorem~\ref{thm:fp_FHCN}]

For any policy, recall that we write $\pi_{n, \Xi}^t(a|x) = \pi_{n}^t(a|x, \Xi)$. 

We first note that, by the structure of the reward function, we have,
$$
    \nabla_{\mu}r^{\pi_{n, \Xi.\xi}^{\BR, t}}(x, \mu_{n|\Xi}^t) = \nabla_{\mu}\bar r(x, \mu_{n|\Xi}^t) \textrm{ and } \nabla_{\mu}r^{\pi_{n, \Xi.\xi}}(x, \mu_{n|\Xi}^t) = \nabla_{\mu}\bar r(x, \mu_{n|\Xi}^t).
$$

Moreover,
$$
    - <\frac{d}{dt} \pi_{n, \Xi.\xi}^t(.|x), r(x,. ,\mu_{n|\Xi}^t)>\mu_{n|\Xi}^t(x)
    =
    -\frac{1}{t}r^{\pi_{n, \Xi.\xi}^{\BR,t}}(x,\mu_{n|\Xi}^t)\mu_{n|\Xi}^{\BR,t}(x)+\frac{1}{t}r^{\pi_{n, \Xi.\xi}}(x,\mu_{n|\Xi}^t)\mu_{n|\Xi}^{\BR,t}(x)
$$
and
$$
    - r^{\pi_{n, \Xi.\xi}}(x, \mu_{n|\Xi}^t) \frac{d}{dt} \mu_{n|\Xi}^t(x)
    =
    \frac{1}{t}r^{\pi_{n, \Xi.\xi}}(x, \mu_{n|\Xi}^t)\mu_{n|\Xi}^t(x) - \frac{1}{t}r^{\pi_{n, \Xi.\xi}}(x, \mu_{n|\Xi}^t)\mu^{\BR, t}_n(x)
$$
Using the definition of exploitability together with the above remarks, we deduce:
\begin{align}
    &\frac{d}{dt} \phi(\pi^t) = \frac{d}{dt}\left[\max \limits_{\pi'} J(\mu_0, \pi', \mu^{\pi}) - J(\mu_0, \pi, \mu^{\pi})\right]
    \\
    &=\sum \limits_{n=0}^{N} \,\,\sum_{\Xi, |\Xi|=n} \sum_{\xi} P(\Xi.\xi) \sum \limits_{x \in \mathcal{X}} \Big[<\nabla_{\mu}r^{\pi_{n, \Xi.\xi}^{\BR}, \xi}(x, \mu_{n|\Xi}^t), \frac{d}{dt} \mu_{n|\Xi}^t>\mu^{\BR, t}_{n, \Xi.\xi}(x) 
    \\ 
    &\qquad\qquad- <\nabla_{\mu}r^{\pi_{n, \Xi.\xi}}(x, \mu_{n|\Xi}^t, \xi), \frac{d}{dt} \mu_{n|\Xi}^t>\mu_{n|\Xi}^t(x)
    \\
    &\qquad\qquad- <\frac{d}{dt} \pi_{n, \Xi.\xi}^t(.|x), r(x,. ,\mu_{n|\Xi}^t)>\mu_{n|\Xi}^t(x) - r^{\pi_{n, \Xi.\xi}}(x, \mu_{n|\Xi}^t) \frac{d}{dt} \mu_{n|\Xi}^t(x)\Big]
    \\
    &= \sum \limits_{n=0}^{N} \,\,\sum_{\Xi, |\Xi|=n} \sum_{\xi} P(\Xi.\xi) \sum \limits_{x \in \mathcal{X}} \Big[t <\nabla_{\mu}\bar r(x, \mu_{n|\Xi}^t)), \frac{d}{dt} \mu_{n|\Xi}^t> \frac{1}{t}\left(\mu^{\BR, t}_{n|\Xi}(x)-\mu_{n|\Xi}^t(x)\right)\Big]
    \\
    &\qquad+ \sum \limits_{n=0}^{N} \,\,\sum_{\Xi, |\Xi|=n} \sum_{\xi} P(\Xi.\xi) \sum \limits_{x \in \mathcal{X}}  \big[\frac{1}{t}r^{\pi_{n, \Xi.\xi}}(x, \mu_{n|\Xi}^t)\mu_{n|\Xi}^t(x) - \frac{1}{t}r^{\pi_{n, \Xi.\xi}^{\BR,t}}(x,\mu_{n|\Xi}^t)\mu_{n|\Xi}^{\BR, t}(x)\big]
    \\
    &=  
    - \frac{1}{t} \phi(\pi^t) + \sum \limits_{n=0}^{N} \,\,\sum_{\Xi, |\Xi|=n} \sum_{\xi} P(\Xi.\xi) \sum \limits_{x \in \mathcal{X}} \big[t <\nabla_{\mu}\bar r(x, \mu_{n|\Xi}^t), \frac{d}{dt} \mu_{n|\Xi}^t> \frac{d}{dt} \mu_{n|\Xi}^t(x)\big],
\end{align}
where the last term is non-positive by  Property~\ref{prop:mono-r-dmu} (\textit{i.e.}, thanks to the monotonicity assumption).
\end{proof}

\subsection*{Experiments: A More Complex Setting for the  Beach Bar Process with common noise}

\textbf{Environment:} Following the first setting of the paper where the bar could only close at one given time step, we now introduce a second more complex setting, bringing also of common noise in the beach bar process. Namely, the bar has a probability $p$ to close at every time step up to a point (in practice, this point is half of the horizon: $\frac{N}{2}$). Once the bar is closed, it does not open again. This setting gives $\frac{N}{2} + 1$ possible realizations of the common noise: (1) the case where the bar never closes and (2) the $\frac{N}{2}$ cases where it closes at any of the first $\frac{N}{2}$ time steps.
For the sake of clarity, we only present the evolution of the distributions when the bar finally remains open after $\frac{N}{2}$ time steps, and when it closes at the $\frac{N}{2}^\text{th}$ time step.
    
    \begin{figure}[htbp]
    \centering
    \begin{subfigure}{0.33\textwidth}
      \centering
      \includegraphics[width=1.0\linewidth]{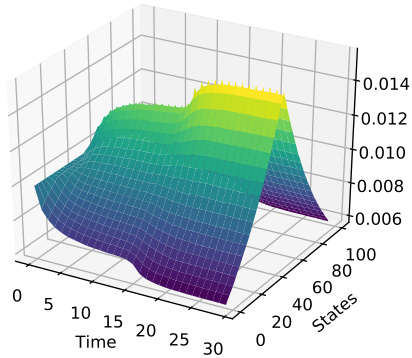}
      \caption{Model-based, the bar stays open}
    \end{subfigure}%
    \begin{subfigure}{0.33\textwidth}
      \centering
      \includegraphics[width=1.0\linewidth]{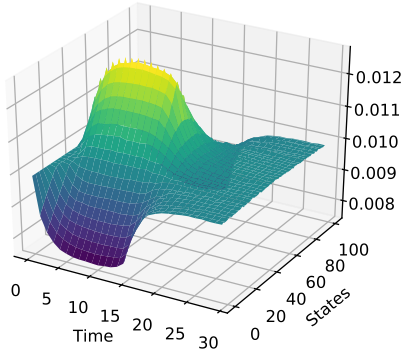}
      \caption{Model-based, the bar closes}
    \end{subfigure}
    \begin{subfigure}{0.33\textwidth}
      \centering
      \includegraphics[width=1.0\linewidth]{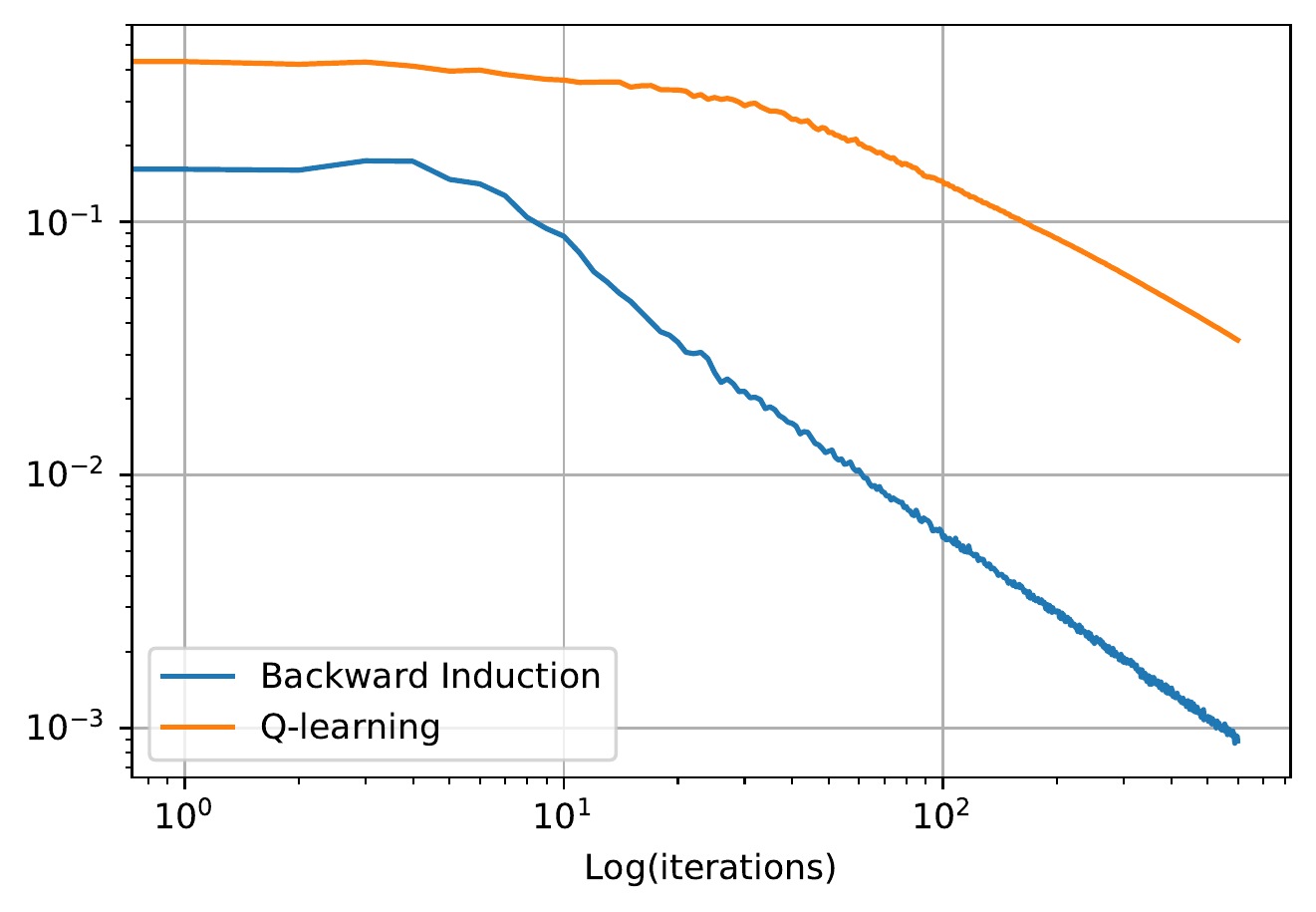}
      \caption{Exploitability}
    \end{subfigure}

    \medskip
    
    \begin{subfigure}{0.33\textwidth}
      \centering
      \includegraphics[width=1.0\linewidth]{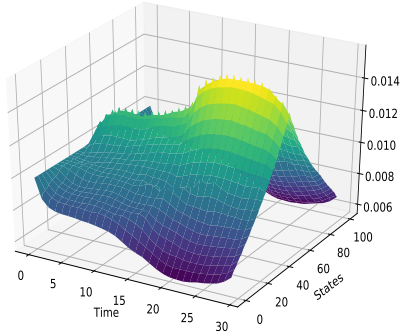}
      \caption{Model-free, the bar stays open}
    \end{subfigure}%
    \begin{subfigure}{0.33\textwidth}
      \centering
      \includegraphics[width=1.0\linewidth]{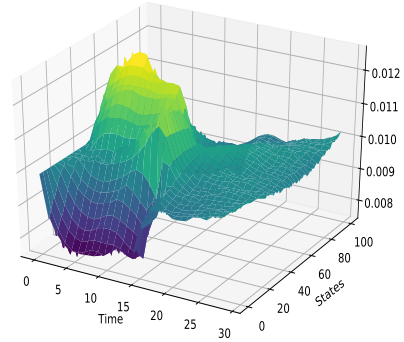}
      \caption{Model-free, the bar closes}
    \end{subfigure}
    
    \caption{2\textsuperscript{nd} common noise setting, the bar has a probability $p=0.5$ to close at every time step before $\frac{N}{2}$.}
    \label{fig:Ql-multiCN}
\end{figure}
    
\textbf{Numerical results}: Similarly to the first setting, we take $|\mathcal{X}| =100$ states and $N=30$ time steps.
As the bar has a probability $p=0.5$ to close at every time step until $\frac{N}{2}$, the distribution is flatter to anticipate the fact that people might need to spread. We can see that both model-based and model-free approaches converge to a Nash equilibrium and that model-based converges faster than model-free.

\newpage
$\;$
\newpage 
\section{Continuous Time Fictitious Play: the $\gamma$-discounted case}\label{Section_Gamma}
Surprisingly, the analysis also holds in the $\gamma$-discounted case with again the same style of reasoning. However, the distribution considered will be the $\gamma$-weighted occupancy measure instead of the distribution over states. In this section, we reintroduce the notations and we prove similar continuous time FP convergence results.

Consider, given the following:
\begin{itemize}
    \item a finite state space $\mathcal{X}$ ($x \in \mathcal{X}$), 
    \item a finite action space $\mathcal{A}$ ($a \in \mathcal{A}$),
    \item the set of distributions over state is $\Delta \mathcal{X}$ ($\mu \in \Delta \mathcal{X}$),
    \item a reward function $r(x,a,\mu)$,
    \item the transition function $p(x'|x,a)$,
    \item a policy: $\pi(a|x)$. 
\end{itemize}
We will write:
\begin{itemize}
    \item $p^\pi(x'|x) = \mathbb{E}_{a\sim \pi(.|x)}[p(x'|x,a)]$,
    \item $r^\pi(x, \mu) = \mathbb{E}_{a\sim \pi(.|x)}[r(x,a,\mu)]$,
\end{itemize}
The cumulative $\gamma$-discounted reward is defined as: $$J_{\gamma}(x_0, \pi, \mu) = \mathbb{E} \left[\sum \limits_{n=0}^{+\infty} \gamma^n r(x_n, a_n, \mu) \; | \;  x_{n+1} \sim p(.|x_n, a_n), \; a_n \sim \pi(.|x_n) \right]$$

\textbf{Useful properties:}
We have $\mu_{\gamma}^\pi (x') = \mu_0(x') + \gamma \sum \limits_{x \in \mathcal{X}} p^{\pi}(x'|x) \mu_{\gamma}^\pi (x)$ (in vectorial notations ${\mu_\gamma^{\pi}}^\top = \mu_0^\top (I-\gamma P^\pi)^{-1} $).\newline
The $\gamma$-discounted reward can be written as: $J_{\gamma}(x_0, \pi, \mu) = \sum \limits_{x \in \mathcal{X}} \mu_{\gamma}^\pi (x) r^\pi(x, \mu)$.\newline
We then have a similar formula for the policy generating the average distribution $\bar \mu^{\pi}_{\gamma}(x,t) = \frac{1}{t} \int \limits_{0}^t \mu_{\gamma}^{\pi}(x,s) ds$ can be written $\bar \pi_{\gamma}(a|x,t) = \frac{\int \limits_{0}^t \mu_{\gamma}^{\pi}(x,s) \pi(a|x,s) ds}{\int \limits_{0}^t \mu_{\gamma}^{\pi}(x,s) ds}$.

Finally, we can write: 
\begin{align}
    \bar \mu_{\gamma}^{\pi}(x,t) \bar \pi_{\gamma}(a|x,t) = \frac{1}{t} \int \limits_{0}^t \mu_{\gamma}^{\pi}(x,s) \pi(a|x,s) ds
\end{align}
And:
\begin{align}
    \dot {\bar \mu}^{\pi}_\gamma(x,t) \bar \pi_\gamma(a|x,t) + \bar \mu^{\pi}_\gamma(x,t) \dot {\bar \pi}_\gamma(a|x,t) = \frac{1}{t}\left[ \mu^{\pi}_\gamma(x,t) \pi(a|x,t)-\bar \mu^{\pi}_\gamma(x,t) \bar \pi_\gamma(a|x,t)\right].\label{dt_pin_mun}
\end{align}

\textbf{Fictitious Play in MFGs:}
In the $\gamma$-discounted case, Fictitious Play can be written as (for $t \geq 1$):
$$\dot\mu(x,t) = \frac{1}{t}(\mu^{\BR}_{\gamma}(x,t) -\mu(x,t))$$ where $\mu^{\BR}_{\gamma}(x,t)$ is the distribution of a best response against $\mu(x,t)$ of policy $\pi^{\BR}(a|x,t)$. In this section, we will write $\pi(a|x,t)$ the policy of the distribution $\mu(x,t)$. From Eq.\eqref{dt_pin_mun}, we can deduce the following property:

\begin{property}
\label{dt_pi}
$$\forall n, \;\dot \pi(a|x,t) \mu(x,t) = \frac{1}{t} \mu^{\BR}_{\gamma}(x,t) [\pi^{\BR}(a|x,t) - \pi(a|x,t)]$$
\end{property}
\begin{proof}
Such representation directly follows from   Eq.\eqref{dt_pin_mun}.
\end{proof}

We are now in position to turn to the Lyapounov congerging property of the Fictitious process. 
\begin{property}
Under the monotony assumption, we can show that the exploitability ($\phi(t) = \max \limits_{\pi'} J_{\gamma}(x_0, \pi', \mu^\pi) - J_{\gamma}(x_0, \pi, \mu^\pi)$) is a strong Lyapunov function of the system:
$$\dot \phi(t) \leq - \frac{1}{t} \phi(t)$$
\end{property}

\begin{proof}
\begin{align}
    &\dot \phi(t)\nonumber\\
    &=\sum \limits_{x \in \mathcal{X}} \big[\overbrace{<\nabla_{\mu}r^{\pi^{\BR}}(x, \mu(t)), \dot \mu(t)>\mu^{\BR}_{\gamma}(x, t) - <\nabla_{\mu}r^{\pi}(x, \mu(t)), \dot \mu(t)>\mu(x, t)}^{\textrm{With } \nabla_{\mu}r^{\pi^{\BR}}(x, \mu(t)) = \nabla_{\mu}\bar r(x, \mu(t)) \textrm{ and } \nabla_{\mu}r^{\pi}(x, \mu(t)) = \nabla_{\mu}\bar r(x, \mu(t))}\nonumber\\
    &\underbrace{- <\dot \pi(.|x,t), r(x,. ,\mu(t))>\mu(x, t)}_{=-\frac{1}{t}r^{\pi^{\BR}}(x,\mu(t))\mu_{\gamma}^{\BR}(x,t)+\frac{1}{t}r^{\pi}(x,\mu(t))\mu_{\gamma}^{\BR}(x,t)} \underbrace{- r^{\pi}(x, \mu(t)) \dot \mu(x, t)}_{=\frac{1}{t}r^{\pi}(x, \mu(t))\mu(x, t) - \frac{1}{t}r^{\pi}(x, \mu(t))\mu^{\BR}_{\gamma}(x, t)}\big]\nonumber\\
    &= \sum \limits_{x \in \mathcal{X}} \big[t <\nabla_{\mu}\bar r(x, \mu(t))), \dot \mu(t)> [\frac{1}{t}(\mu_{\gamma}^{\BR}(x, t)-\mu(x, t))]\big]\nonumber\\
    &\qquad+ \sum \limits_{x \in \mathcal{X}}  \big[\frac{1}{t}r^{\pi}(x, \mu(t))\mu(x, t) - \frac{1}{t}r^{\pi^{\BR}}(x,\mu(t))\mu_{\gamma}^{\BR}(x,t)\big]\nonumber\\
    &=  - \frac{1}{t} \phi(t) + \underbrace{ \sum \limits_{x \in \mathcal{X}} \big[t <\nabla_{\mu}\bar r(x, \mu(t))), \dot \mu(t)> \dot \mu(x, t))\big]}_{\leq 0 \textrm{ by monotony}}\\
    &\le  - \frac{1}{t} \phi(t) 
    \end{align}
\end{proof}

\subsection*{Experiment: the Beach Bar Process with $\gamma$-discounted reward.}

\textbf{Environment:} We implement the beach bar process in the $\gamma$-discounted setting.

\textbf{Numerical results:} We set $\gamma = 0.9$. The algorithm estimating the best response to a fixed distribution $\mu$ is Policy Iteration in the case of the model based approach and $Q$-learning in the model-free. As the flow of distributions converges towards the stationary distribution which is not time-dependant, we only plot the final distribution obtained after $300$ time steps (and not the evolution throughout time as before). In particular, we notice that model-based and model-free approaches converge towards the same distribution. We can also observe that the convergence rate of exploitability is $O(1/t)$ for the model-based and slower for the model-free approach.

\begin{figure}[htbp]
    \centering
    \begin{subfigure}{0.33\textwidth}
      \centering
      \includegraphics[width=1.0\linewidth]{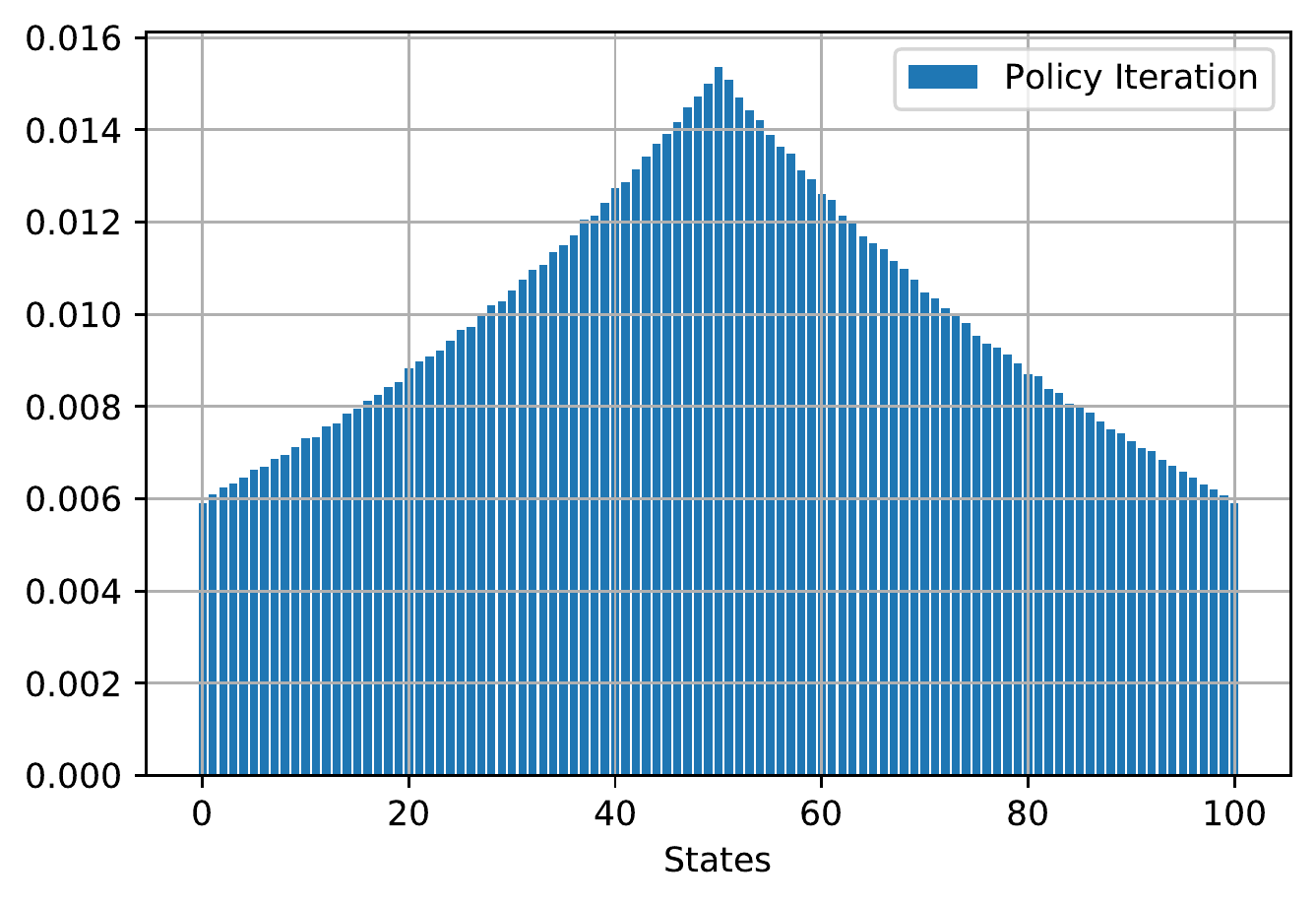}
      \caption{Model-based}
    \end{subfigure}%
    \begin{subfigure}{0.33\textwidth}
      \centering
      \includegraphics[width=1.0\linewidth]{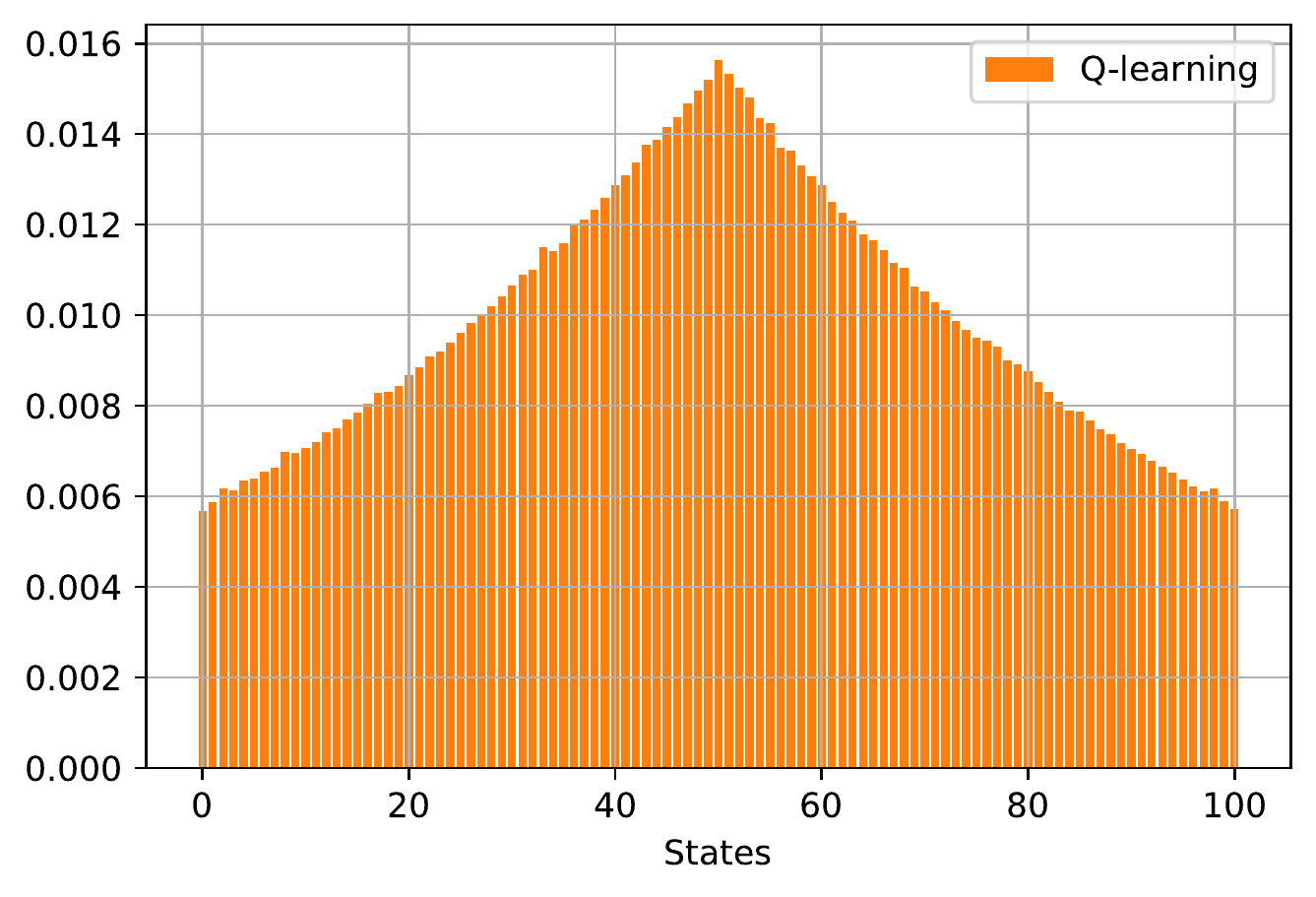}
      \caption{Model-free}
    \end{subfigure}%
    \begin{subfigure}{0.33\textwidth}
      \centering
      \includegraphics[width=1.0\linewidth]{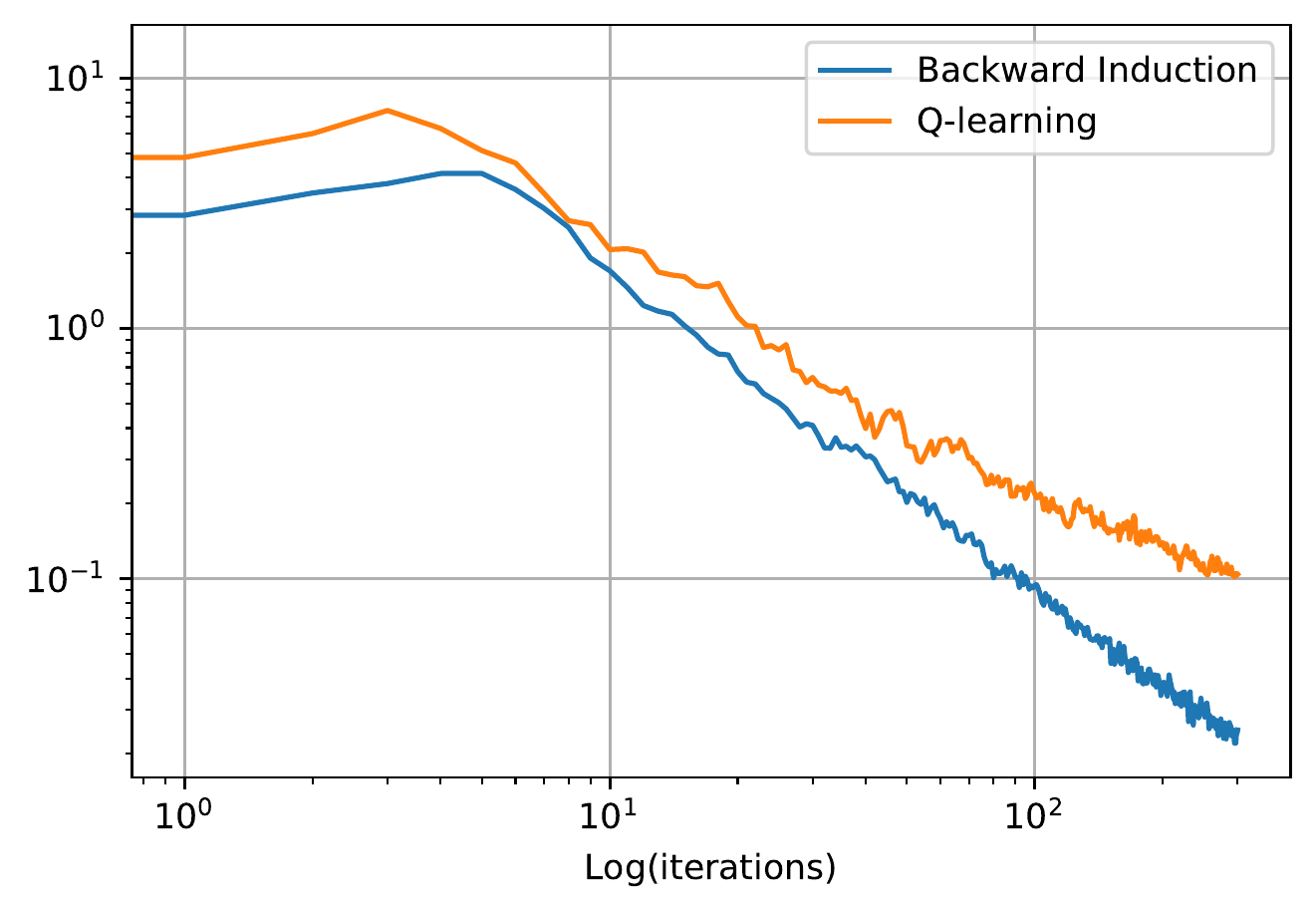}
      \caption{Exploitability}
    \end{subfigure}%
   
    \caption{Final distributions and exploitability in the $\gamma$-discounted case}
    \label{fig:distrib-exp_gamma}
    \end{figure}

\newpage    
\section{Algorithms}\label{Sec_Algorithm}

\begin{algorithm2e}[ht!]
\SetKwInOut{Input}{input}\SetKwInOut{Output}{output}
\Input{Start with a fixed distribution $\mu=(\mu_k)_k$ and $Q^k = 0$ and $\epsilon$ and the learning rate $\alpha$.}
\caption{$Q$-Learning in Mean Field Games \label{algQ_learning}}
\For{$k=0,\dots, K$:}{
    sample $x^k_0 \sim \mu_0$ \;
    \For{$n=0,\dots, N$:}{
        $a_n^k$ is $\epsilon$-greedy with respect to $Q^k(x^k_n,.)$.\;
        if not terminal sample $x_{n+1}^k$ according to $p(.|x^k_n a_n^k)$.\;
        $Q^{k+1}_n(x^k_n, a_n^k) = (1-\alpha)Q^{k+1}_n(x^k_n, a_n^k) + \alpha[r(x^k_n, a_n^k, \mu_{k-1}) + \max_b Q^{k}_{n+1}(x^k_{n+1}, b)]$\;
    }
}
\Return{$\pi^*$ a greedy policy with respect to $Q^K$}
\end{algorithm2e}
\begin{algorithm2e}[ht!]
\SetKwInOut{Input}{input}\SetKwInOut{Output}{output}
\Input{Start with a fixed policy $\pi$ and an initial distribution $\mu_0 = \mu^{\pi}_0$}
\caption{Empirical Density Estimation \label{algEDE}}
\For{$k=0,\dots, K$:}{
    sample $x^k_0 \sim \mu_0$ \;
    \For{$n=0,\dots, N$:}{
        $a_n^k$ with respect to $Q^k(x^k_n,.)$.\;
        if not terminal sample $x_{n+1}^k$ according to $p(.|x^k_n a_n^k)$.
    }
}
Finally $\forall x, n \in \mathcal{X} \times \{0, \dots, N\} \; \hat \mu^\pi_n (x) = \frac{1}{K+1} \sum_{k=0}^K 1_{x_n^k=x}$\;
\Return{$\hat \mu^\pi_n$}
\end{algorithm2e}
\begin{algorithm2e}[ht!]
\SetKwInOut{Input}{input}\SetKwInOut{Output}{output}
\Input{Start with a fixed distribution $\mu=(\mu_k)_k$ and a terminal $Q$-function $Q^{\mu}_N(x,a) = r(x, a, \mu_N)$}
\caption{Backward Induction in Mean Field Games \label{algBI}}
\For{$n=N,\dots,0$:}{
  $\pi^*_k$ is greedy with respect to $Q^{\mu}_k(x,a)$. \;
  $\forall a, x \in \mathcal{A} \times \mathcal{X} \; Q^{\mu}_{n-1}(x,a) = r(x, a, \mu_{n-1}) + \sum \limits_{x' \in \mathcal{X}}p(x'|x,a)\max \limits_b Q^{\mu}_{n}(x', b)$ \;
}
\Return{$\pi^*$}
\end{algorithm2e}
\begin{algorithm2e}[ht!]
\SetKwInOut{Input}{input}\SetKwInOut{Output}{output}
\Input{Start with a fixed policy $\pi$ and an initial distribution $\mu_0 = \mu^{\pi}_0$}
\caption{Density Estimation \label{algDE}}
\For{$n=1,\dots,N$:}{
  $\forall x \in \mathcal{X} \; \mu^{\pi}_{n}(x') = \sum \limits_{x, a \in \mathcal{X}\times \mathcal{A}} \pi_{n-1}(a|x)p(x'|x,a) \mu^{\pi}_{n-1}(x)$ \;
}
\Return{$\mu^{\pi}$}
\end{algorithm2e}
\newpage
\section{Linear Quadratic Model}\label{Sec_LQ_Appx}
\subsection{Description}
For the sake of completeness, we explain here how we obtained the benchmarck solution for the LQ problem. The original model has been introduced in~\cite{MR3325083} and corresponds to the continuous time and continuous spaces version of the LQ problem implemented in Section~\ref{sec:expe-FP-FH}. Each player can influence their speed with a control denoted by $\alpha_t$. The dynamics of the players is linear in their state, their control and the mean position, denoted by $\bar m_t$. It is affected by an idiosyncratic source of randomness $\bW = (W_t)_{t \ge 0}$ as well as a common noise in the form of a Brownian motion $\bW^0 = (W^0_t)_{t \ge 0}$. Given a flow of \emph{conditional} mean positions $\boldsymbol{\bar\mu} = (\bar \mu_t)_{t \in [0,T]}$ adapted to the filtration generated by $\bW^0$, the cost function of a representative player is defined as:
\begin{align}
	J(a; {\boldsymbol{ \bar \mu}}) 
	= \mathbb{E}\left[\int_0^T \left( \frac{1}{2}a_t^2 - q a_t (\bar \mu_t - X_t) + \frac{\kappa}{2} (\bar \mu_t - X_t)^2 \right) dt + \frac{c_{\rm term}}{2} (\bar \mu_T - X_T)^2\right]
\end{align}
Subject to the dynamics:
$$
	dX_t = [K (\bar \mu_t - X_t) + a_t] dt + \sigma \left( \rho \, dW^0_t + \sqrt{1 - \rho^2} dW_t\right).
$$
At equilibrium, we must have $\bar \mu_{t} = \mathbb{E}[X_{t}|(W^0_{s})_{s \le t}]$ for every $t \in [0,T]$.

Here, $\rho \in [0, 1]$ is a constant parameterizing the correlation between the noises, and  $q, \kappa, c, a, \sigma$ are positive constants. We assume that $q \le \kappa^2$ so that the running cost is jointly convex in the state and the control variables. 

The terms $(\bar \mu_t - X_t)$ in the dynamics and the cost function attract the process towards the mean $\bar \mu_t$. 
For the interpretation of this model in terms of systemic risk, the reader is referred to~\cite{MR3325083}. 
 The model is of linear-quadratic type and has an explicit solution through a Riccati equation, which we use as a benchmark. The optimal control at time $t$ is a linear combination of $X_t$ and $\bar \mu_t$, whose coefficients depend on time. More precisely, it is given by:
 $$
    a_t = (q+\eta_t)(\bar \mu_t - X_t),
 $$
 where $\eta$ solves the following Riccati ODE:
$$
    \dot \eta_t = 2(a+q)\eta_t + \eta_t^2 - (\kappa - q^2), \qquad \eta_T = c_{\rm term}, 
$$
whose solution is explicitly given by:
$$
    \eta_t = \frac{-(\kappa-q^2)\left(e^{(\delta^+ - \delta^-)(T-t)} - 1 \right) - c\left(\delta^+e^{(\delta^+ - \delta^-)(T-t)} - \delta^- \right)}{\left(\delta^-e^{(\delta^+ - \delta^-)(T-t)} - \delta^+ \right) - c \left(e^{(\delta^+ - \delta^-)(T-t)} - 1 \right)}
$$
where $\delta^\pm = -(a+q) \pm \sqrt{R}$ with $R = (a+q)^2 + (\kappa-q^2)>0$.

\newpage
\section{Common Success Metrics in Mean Field Games}
The optimal value function satisfies the recursive equation: 
$$
    V^{*, \mu}_N(x) = r(x, \mu_N), 
    \qquad 
    V^{*, \mu}_{n-1}(x) = \max_{a} \left\{r(x, a, \mu_{n-1}) + \sum \limits_{x' \in \mathcal{X}}p(x'|x, a)V^{*, \mu}_{n}(x') \right\}.
$$
In particular, by definition:
$$
    \max \limits_{\pi'} J(\mu_0, \pi', \mu^{\pi})
    =
    \mathbb{E}_{x \sim \mu_0}[V^{*, \mu^{\pi}}_0(x)]
$$
And:
$$
    J(\mu_0, \pi, \mu^{\pi})
    =
    \mathbb{E}_{x \sim \mu_0}[V^{\pi, \mu^{\pi}}_0(x)].
$$
Let $(x,\mu) \mapsto a^*(x,\mu)$ be such that for every $n$ and (reasonable?) $\mu$:
\begin{equation}
    \label{eq:Vstar-fixedpoint}
    V^{*, \mu}_{n-1}(x) = r(x, a^*(x,\mu_{n-1}), \mu_{n-1}) + \sum \limits_{x' \in \mathcal{X}}p(x'|x, a^*(x,\mu_{n-1}))V^{*, \mu}_{n}(x'),
\end{equation}
\textit{i.e.} $a^*$ is an optimal control. Then,  one way to check whether the value function we learned (\textit{e.g.} deduced from the $Q$-table) is a good approximate solution, is to compute the residual in the fixed point equation~\eqref{eq:Vstar-fixedpoint}. In other words, if the learned value function is $\tilde{V}$ and the policy is $\pi$ with associated distribution $\mu^{\pi}$, then, we compute:
$$
    \tilde{V}_{n-1}(x) - \left[ r(x, a^*(x,\mu^\pi_{n-1}), \mu^\pi_{n-1}) + \sum \limits_{x' \in \mathcal{X}}p(x'|x, a^*(x,\mu^\pi_{n-1}))\tilde{V}_{n}(x') \right]
$$
for every $n,x$. Taking the norm over $(n,x) \in \{1,\dots, N\} \times \mathcal{X}$ provides a metric to assess the convergence of the value function.

\textbf{Link with fixed-point iterations:} One of the most basic methods to compute a MFG equilibrium is to iteratively solve the forward equation for the distribution and the backward equation for the value function. A typical stopping criterion is that the distribution and the value function do not change too much between two successive iterations. We argue that this property implies an upper bound on the exploitability. To be specific, say that at iteration $k$, given a value function $V_{k}$ and its associated optimal control $\pi_{k}$, we compute the induced flow of distributions $\mu_{k} = \mu^{\pi_k}$, and then we compute the value function $V_{k+1}$ and the best response $\pi_{k+1}$ of an infinitesimal player against this flow of distributions. Note that:
$$
    \max_{\pi'} J(\mu_0, \pi', \mu_{k})
    =\max_{\pi'} J(\mu_0, \pi', \mu^{\pi_k})
    = J(\mu_0, \pi_{k+1}, \mu^{\pi_k})
    = \sum_{x} V_{k+1,0}(x) \mu_0(x)
$$
And:
$$
    J(\mu_0, \pi_{k}, \mu_{k}) = \sum_{x} V_{k,0}(x) \mu_0(x).
$$
Hence, if we know that $\|V_{k+1} - V_{k}\|_{\infty} := \sup_{x,n} |V_{k+1,n}(x) V_{k,n}(x)| < \epsilon$, then, in particular, $|V_{k+1,0}(x) - V_{k,0}(x)| < \epsilon$ for all $x$ and hence the exploitability is at most $\epsilon$ too.  Conversely, under suitable regularity assumptions, we can expect that a small exploitability implies $V_{k+1} \approx V_{k}$ not only at time $0$ but at every time.
\end{document}